\date{}
\begin{document}
%	\tableofcontents
	\title[Ascent and Descent of Composition operators on Orlicz-Lorentz Spaces]{Ascent and Descent of Composition operators on Orlicz-Lorentz Spaces }
	
		\author{Neha Bhatia}
	\address{Neha Bhatia\newline Department of Mathematics, University of Delhi, Delhi 110007, India}
	\email{nehaphd@yahoo.com}

\author{ Anuradha Gupta}
\address{Anuradha Gupta\newline Department of Mathematics, Delhi College of Arts and Commerce, University of Delhi, Delhi 110023, India}
 \email{dishna2@yahoo.in}
 
 	\thanks{Submitted Month Day, 20xx. Published Month Day, 20xx.}
 %\thanks{}
 %\thanks{}

 \subjclass[2010]{47B33, 47B38, 46E30}
 \keywords{ Ascent,composition operators, descent, Orlicz-Lorentz spaces, non-singular
 	transformation, Radon-Nikodym derivative.}

\begin{abstract} The aim of this paper is to discuss the  characterizations for the composition operator on Orlicz-Lorentz spaces $L_{\phi,w}$ to have finite ascent (descent).
	\end{abstract}
	\maketitle \numberwithin{equation}{section}

\newtheorem*{remark}{Remark}

\newtheorem{definition}{Definition}
\newtheorem{corollary}{Corollary}[section]
\newtheorem{theorem}[corollary]{Theorem}
\newtheorem{lemma}[corollary]{Lemma}
\newtheorem{example}{Example}

%	\section{Introduction}
	\section{Introduction}
	Let $V$ be a vector space and $T$ be a linear operator on $V$. Then $N(T)$ and $R(T)$ denotes the null space and the range space of $T$ given by
	
	$$ N(T)=\{v\in V: T(v)=0\}$$ and
	$$R(T)=\{T(v): v\in V\}.$$
	
	For $m\geq 0$, the null space and range space of $T^m$ satisfies
	\begin{align*}
		N(T)\subseteq N(T^2)\subseteq N(T^3)\subseteq \cdots N(T^m)\subseteq N(T^{m+1})\cdots,\\
		R(T)\subseteq R(T^2)\subseteq R(T^3)\cdots \subseteq R(T^m)\subseteq R(T^{m+1}) \cdots
	\end{align*}
	
	\begin{definition}
		  If  there  is  an integer  $m>0$  such  that   $N(T^m) = N(T^{m+1})$, the  smallest  such  integer  is  called  the  ascent  of  $T$  and we  denote it   by  $\mathcal{A}(T)$.    If  no  such  integer $m$ exists such that $N(T^m) = N(T^{m+1})$,  we say  that ascent of $T$ is infinite i.e., $\mathcal{A}(T)=\infty$.
	\end{definition}

	\begin{definition}
		 If  there  is  an  integer  $m>0$  such  that  $R(T^{m+1})= R(T^m)$,    the  smallest  such  integer  is  called  the  descent  of  T  and  is  denoted  by  $\mathcal{D}(T)$.   If  there is no  such  integer $m$ such that $R(T^{m+1})= R(T^m)$, we say  that  descent of $T$ is infinite i.e., $\mathcal{D}(T)=\infty$.
	\end{definition}
	
	Many authors have studied the characterizations of composition and weighted
	composition operators with ascent and descent on different function spaces such as
	$L^p$ space, $l^p$ space, Orlicz space, Lorentz space [\cite{2}, \cite{4}, \cite{5}, \cite{6}]. Motivated by their work,
	we have discussed the characterization of composition operators on Orlicz-Lorentz
	spaces $L_{(\phi,\omega)}$ to have finite ascent and descent.

	\section{Composition Operators on Orlicz-Lorentz Spaces}
	Let $(E,\mathcal{E},\nu)$ be a $\sigma$-finite measure space and
	$f$ be any complex-valued measurable function on the space.
	
	For $s \geq 0$, define $\nu_{f}$ the \textit{distribution function} as
	\begin{align*}\nu_{f}(s) = \nu\{ x \in E :|f(x)| > s \}\end{align*}
	Clearly, $\nu_f$ is decreasing.	By $f^*$ we mean the \textit{non-increasing rearrangement} of $f$ given as
	\begin{align*}f^*(t) =\inf \{ s > 0 :\mu_{f}(s) \leq t\},\quad   t\geq 0\end{align*}
	$f^*$ is a non-negative and decreasing function.
	
	By a weight function $\omega$, we mean
	$\omega :J \rightarrow  J$ where $J = (0 \ \infty)$ a non-increasing locally integrable function
	such that $\int_{0}^{\infty}\omega(t) dt =\infty$.
	
	Let $\phi : [0 \ \infty) \rightarrow [0 \ \infty)$ be a convex function such that
	\begin{align*}&\phi(x) = 0 \Leftrightarrow x=0, \\ &\lim_{x\rightarrow \infty}\phi(x) = +\infty.\end{align*}
	
	Such a function is called a \textit{young function}.\ The young function is
	said to satisfy the $\Delta_2$-condition if for some $k >  0$,
	\begin{align*} \phi(2x) \leq k \phi(x),\quad\text{for all } \ x > 0\,. \end{align*}
	If $\phi$ satisfies $\Delta_2$-condition, then we define the space $L_{\phi,\omega}$  as
	\begin{align*} \bigg\{f : E \rightarrow \mathbb{C} \ \text{measurable} :\int_{0}^{\infty} \phi(\alpha f^*(t))\omega(t) dt <\infty \ \text{for some } \alpha > 0 \bigg\} \end{align*}
	The space $L_{(\phi,\omega)}$ is called an Orlicz-Lorentz space and is a Banach space with respect to
	the Luxemburg norm given by
	\begin{align*} \| f\|_{\phi,\omega} =\inf \bigg\{\epsilon > 0: \int_{0}^{\infty} \phi(|f^*(t)|/\epsilon)\omega(t)dt \leq 1\bigg\} \end{align*}
	
	Orlicz-Lorentz space is common generalization of Orlicz space and Lorentz space.
	If $w(t) = 1$ then it becomes an Orlicz space. If $\phi(x) = x^p$, $1 \leq  p < \infty$, then it
	becomes a Lorentz space $L_{(p,q)}$. Let $\mathcal{B}(L(\phi,\omega))$ denote the Banach algebra of all bounded
	linear operators on $L(\phi,\omega)$.
	For more details on Orlicz-Lorentz spaces, we refer to \cite{1}, \cite{7}.
	
	Let $\Psi:E\rightarrow E$ be a measurable non-singular transformation (i.e., $\nu(\Psi^{-1}(S))=0$ whenever
	$\nu(S)=0$ for $S\in\mathcal{E})$.
	
	If $\Psi$ is non-singular, then we say that $\nu \Psi^{-1}$ is absolutely continuous with respect to
	$\nu$.\ Hence, by Radon-Nikodym
	theorem, there exists a unique non-negative measurable function $g$ such that
	\begin{align*}
		(\nu \Psi^{-1})(S)=\int_{S} g d\nu,\quad \text{for} \ S \in \mathcal{E}.
	\end{align*}
	$g$ is called the Radon-Nikodym derivative and is denoted by $\dfrac{d\nu \Psi^{-1}}{d\nu}$. For $m\geq 2$, we observe that $\nu_{\Psi}^m \ll \nu_{\Psi}^{m-1} \ll \cdots \nu_\Psi$. Hence
	\begin{align*}
		\nu_\Psi^m=\int_S f_\Psi d \nu \ for \ S \in \mathcal{E}.
	\end{align*}

\begin{definition}
	Measures $\nu_1$ and $\nu_2$ are said to be equivalent if $\nu_2 \ll \nu_1 \ll \nu_2$.
\end{definition}
\begin{definition}
	A measurable transformation $\Psi$ is said to be measure preserving if it preserves the measure i.e., $\nu(\Psi^{-1}(A))= \nu(A)$ for all $A \in \mathcal{E}$.
\end{definition}

\begin{definition} Let $({E}, \mathcal{E}, \nu)$ be a measure space. A measurable transformation $\Psi:E \to E$ is said to be \textit{pre-positive} if it satisfies the condition $\nu(\Psi^{-1}(A)) > 0$ whenever $\nu(A) > 0$.\end{definition}	

	Let $\Psi$ be a measurable transformation then the  composition
	operator $C_{\Psi}$ \cite{8} induced by 
	$\Psi$ is given by
	\begin{align*}
		C_{\Psi}= f\circ \Psi,\quad\text{for every} \ f\in L_{(\phi,w)}.
	\end{align*}
%$ C_\Psi$, i.e., it  becomes the composition operator induced by $\Psi$.

	Let $\Psi:E \rightarrow E$ is a non- singular measurable transformation, then $\Psi^m$ is also a non-singular measurable transformation for every non-negative integer $m$ with respect to the measure $\nu$. Thus, we can define the composition operator $C_{\Psi^m}$ on Orlicz-Lorentz space $L_{(\phi,w)}$, such that
	$ C_\Psi^m(f_\Psi) = f_\Psi \ o \ \Psi^m = C_{\Psi^m}(f_\Psi)$ for every measurable function $f_\Psi$ of the Orlicz-Lorentz space. Also,  the measure $\nu o \Psi^{-m}$ is defined as
	$$\nu o \Psi^{-m}(S)=\nu o \Psi^{-(m-1)}(\Psi^{-1}(S))=\nu o \Psi^{-1}(\Psi^{-(m-1)}(S)) \ for \ S \in \mathcal{E}. $$
	Then
	\begin{align}
	\ldots \ll \nu \ o \ \Psi^{-(m+1)} \ll \nu \ o \ \Psi^{-m} \ll \nu \Psi^{-(m-1)} \ll \ldots \ll \nu \ o \ \Psi^{-1} \ll \nu.\label{eq:2.1}
	\end{align}
	
	If we put $\nu_m= \nu \ o \ \Psi^{-m}$, then by Radon - Nikodyn theorem, there exists a non-negative locally integrable function $f_{\Psi^{m}}$ on $E$ satisfying
	\begin{align}
		\nu_m(S)=\int_S f_{\Psi^m}(x) d\nu (x) \ for \; \ S \in \mathcal{E}.\label{eq: 2.2}
	\end{align}
	
	Here, $f_{\Psi^m} \left(=\frac{d\nu_m}{d\nu}\right)$
	is called the Radon -Nikodym derivative of $\nu_m$ with respect to $\nu$. 
	
	In \cite{1}, the necessary and sufficient condition for the boundedness of the composition operators on Orlicz-Lorentz spaces $L_{\phi,w}$ are discussed and given by the following theorem:
	\begin{theorem}
		Let $\Psi : E \rightarrow E$ be a non-singular measurable transformation. Then the composition operator $C_\Psi$  is bounded on $L_{\phi,w}$ if and only if there
		exists a constant $K>0$ such that 
		$$ \nu \Psi^{-1}(A) \leq K \nu(A)\  for\ A \in \ \mathcal{E}.$$
	\end{theorem}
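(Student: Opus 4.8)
The plan is to prove both implications by reducing everything to two facts: how the non-increasing rearrangement transforms under composition, and the exact value of the Luxemburg norm on an indicator. For the latter, if $\nu(A)=r<\infty$ then $\chi_A^*=\chi_{[0,r)}$, so writing $W(r)=\int_0^r\omega(t)\,dt$ and setting the defining modular equal to $1$ gives
\begin{equation*}
\|\chi_A\|_{\phi,\omega}=\frac{1}{\phi^{-1}\!\left(1/W(\nu(A))\right)}=:\Phi(\nu(A)),
\end{equation*}
where $\Phi$ is strictly increasing, $\Phi(r)\to0$ as $r\to0$, and $\Phi(r)\to\infty$ as $r\to\infty$ (the last because $\int_0^\infty\omega=\infty$). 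I also record the distributional identity $\nu_{C_\Psi f}(s)=(\nu\Psi^{-1})\{|f|>s\}$, valid for all measurable $f$ and all $s\ge0$.

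For sufficiency, assume $\nu\Psi^{-1}(A)\le K\nu(A)$ for all $A\in\mathcal E$; enlarging $K$ I may take $K\ge1$. The identity gives $\nu_{C_\Psi f}(s)\le K\,\nu_f(s)$, and passing to rearrangements yields the pointwise bound $(C_\Psi f)^*(t)\le f^*(t/K)$. Substituting $s=t/K$ in the modular and using that $\omega$ is non-increasing (so $\omega(Ks)\le\omega(s)$ for $K\ge1$) gives
\begin{equation*}
\int_0^\infty\phi\!\left(\frac{(C_\Psi f)^*(t)}{\varepsilon}\right)\omega(t)\,dt\le K\int_0^\infty\phi\!\left(\frac{f^*(s)}{\varepsilon}\right)\omega(s)\,ds.
\end{equation*}
Choosing $\varepsilon=K\|f\|_{\phi,\omega}$ and using convexity of $\phi$ with $\phi(0)=0$ (so $\phi(x/K)\le K^{-1}\phi(x)$) collapses the right-hand side to at most $1$, whence $\|C_\Psi f\|_{\phi,\omega}\le K\|f\|_{\phi,\omega}$. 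Thus $C_\Psi$ is bounded with $\|C_\Psi\|\le K$; note this direction uses only convexity and monotonicity of $\omega$, not the $\Delta_2$-condition.

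For necessity, suppose $C_\Psi$ is bounded with norm $M$. Applying the operator to $f=\chi_A$, using $C_\Psi\chi_A=\chi_{\Psi^{-1}(A)}$ and the norm formula above, I get
\begin{equation*}
\Phi\!\left(\nu\Psi^{-1}(A)\right)\le M\,\Phi\!\left(\nu(A)\right)\qquad\text{for all }A\in\mathcal E\text{ with }\nu(A)<\infty.
\end{equation*}
It remains to deduce $\nu\Psi^{-1}(A)\le K\nu(A)$, i.e.\ that the Radon–Nikodym derivative $g=d(\nu\Psi^{-1})/d\nu$ lies in $L^\infty$. Arguing by contradiction, if $g\notin L^\infty$ then $\nu\{g>n\}>0$ for all $n$; by $\sigma$-finiteness I choose $A_n\subseteq\{g>n\}$ with $a_n:=\nu(A_n)$ finite, whence $\nu\Psi^{-1}(A_n)=\int_{A_n}g\,d\nu\ge n a_n$ and, by monotonicity of $\Phi$, $\Phi(n a_n)\le M\Phi(a_n)$ for every $n$.

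The hard part, and the step I expect to be the main obstacle, is to contradict $\Phi(na_n)\le M\Phi(a_n)$; this is exactly where $\Delta_2$ enters. Rewriting, the forbidden inequality reads $\phi^{-1}(1/W(a_n))\le M\,\phi^{-1}(1/W(na_n))$, and the $\Delta_2$-condition $\phi(2x)\le k\phi(x)$ is equivalent to $\phi^{-1}(ky)\ge2\phi^{-1}(y)$, so shrinking the argument of $\phi^{-1}$ by a factor $k^{j}$ at least multiplies its value by $2^{j}$. In the non-atomic case I may take $a_n\to0$, and then concavity of $W$ together with the behaviour of $\omega$ at the origin forces the argument ratio $W(na_n)/W(a_n)$ to grow with $n$, so iterating the $\Delta_2$ estimate makes $\Phi(na_n)/\Phi(a_n)\to\infty$, contradicting the bound by $M$; in the presence of atoms one may instead keep $a_n$ bounded below and use $\Phi(\infty)=\infty$ directly. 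The delicate point is matching the growth of $\phi^{-1}$ supplied by $\Delta_2$ against the (possibly singular at $0$) weight $W$ for a suitable choice of $a_n$, and this is where the real work lies; once it is done, $g\in L^\infty$ and the required measure inequality follows.
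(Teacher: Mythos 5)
The paper itself offers no proof of this statement: it is quoted from \cite{1} as a known result, so there is no in-paper argument to compare yours against, and I can only assess your proposal on its own terms. Your sufficiency half is complete and correct, and is the standard argument: $\nu_{C_\Psi f}(s)\le K\nu_f(s)$ gives $(C_\Psi f)^*(t)\le f^*(t/K)$, the substitution $t=Ks$ together with the monotonicity of $\omega$ controls the modular, and convexity of $\phi$ with $\phi(0)=0$ absorbs the factor $K$ into the Luxemburg norm. Your formula $\|\chi_A\|_{\phi,\omega}=1/\phi^{-1}(1/W(\nu(A)))$ is also correct.

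The necessity half, however, has a genuine gap, and it is precisely the step you flag as ``where the real work lies.'' Having reduced to the inequality $\Phi(na_n)\le M\Phi(a_n)$ with $\Phi(r)=1/\phi^{-1}(1/W(r))$, you still must show it is untenable, and the mechanism you sketch does not do this: concavity of $W$ yields only the upper bound $W(nr)\le nW(r)$, which works against you, and it does not force $W(na_n)/W(a_n)\to\infty$. Concretely, take $\omega(t)=1/\bigl(t\log^2(1/t)\bigr)$ near the origin, so that $W(r)=1/\log(1/r)$ for small $r$ (this $\omega$ is non-increasing and locally integrable there), and take $\phi(x)=x$, which satisfies $\Delta_2$. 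If $\nu\{g>n\}\le e^{-n^2}$ — for instance because the level sets are atoms of that measure — you are forced to take $a_n\le e^{-n^2}$, and then $W(na_n)/W(a_n)=n^2/(n^2-\log n)\to1$, hence $\Phi(na_n)/\Phi(a_n)\to1$ and no contradiction with boundedness arises; the same obstruction persists in the non-atomic case, since making $a_n$ even smaller only worsens the ratio. So testing on characteristic functions alone cannot close the argument for general admissible $(\phi,\omega)$: you would need either additional hypotheses (e.g.\ a lower $\Delta_2$-type condition on $W$, or a power-like fundamental function as in the Lorentz case $W(r)=r^{q/p}$, where the ratio is $n^{q/p}$ and the argument does go through), or a larger test class such as simple functions adapted to the level sets of $g=d(\nu\Psi^{-1})/d\nu$. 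As written, the necessity direction is not proved.
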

	Also, the measurable transformation $\Psi$ is said to be bounded away from zero if
	there exists a positive real number $\epsilon$, such that $f(x) \geq  \epsilon$  for almost all $x \in S$ where
	$S = \{ x \in  E:  f(x) \neq 0 \}.$
	
		\section{Ascent of the Composition Operators}
	\begin{theorem}
		Let $C_\Psi$ be a composition operator on $L_{(\phi,w)}$. Then $\mathcal{N}(C_\Psi^m)= L_{(\phi,w)}(E_m)$ for $m \geq 0$ i.e.,  the kernel is  the collection of all the measurable functions $f_\Psi$ in $L_{(\phi,w)}$ satisfying $f_\Psi(x)=0$ for $x \in E / E_m$, where  $E_m= \{ x \in E:f_{\Psi^m}(x)=0 \}.$
	\end{theorem}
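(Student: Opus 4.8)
The plan is to reduce the membership condition $f_\Psi \in \mathcal{N}(C_\Psi^m)$ to a pointwise almost-everywhere statement and then convert that into a statement about the support of $f_\Psi$ via the Radon--Nikodym derivative $f_{\Psi^m}$. Since the Luxemburg norm of a function vanishes precisely when its non-increasing rearrangement vanishes, and $g^*\equiv 0$ holds if and only if $\nu_g(s)=0$ for every $s>0$, i.e. $g=0$ $\nu$-a.e., the condition $C_\Psi^m f_\Psi = f_\Psi\circ\Psi^m = 0$ in $L_{(\phi,w)}$ is equivalent to $f_\Psi\circ\Psi^m = 0$ $\nu$-almost everywhere. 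I would record this equivalence first, since it is the bridge between the Banach-space formulation and the measure-theoretic computation.

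Writing $S=\{x\in E : f_\Psi(x)\neq 0\}$ for the support of $f_\Psi$, the next step is the set identity $\{x\in E : (f_\Psi\circ\Psi^m)(x)\neq 0\} = \Psi^{-m}(S)$, which follows directly from the definition of $\Psi^{-m}$. Hence $f_\Psi\circ\Psi^m=0$ $\nu$-a.e.\ is the same as $\nu(\Psi^{-m}(S))=0$. Using $\nu_m=\nu\circ\Psi^{-m}$ together with the Radon--Nikodym representation \eqref{eq: 2.2}, this becomes
\[
\nu_m(S)=\int_S f_{\Psi^m}(x)\,d\nu(x)=0.
\]
Since $f_{\Psi^m}\geq 0$, the vanishing of this integral is equivalent to $f_{\Psi^m}=0$ $\nu$-a.e.\ on $S$; that is, $S\setminus E_m$ is $\nu$-null, where $E_m=\{x\in E : f_{\Psi^m}(x)=0\}$. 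This is exactly the assertion that $f_\Psi(x)=0$ for $\nu$-almost every $x\in E\setminus E_m$, i.e.\ $f_\Psi\in L_{(\phi,w)}(E_m)$.

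For the converse inclusion I would run the same chain of equivalences backwards: if $f_\Psi$ vanishes a.e.\ off $E_m$, then $S\subseteq E_m$ up to a $\nu$-null set, so $f_{\Psi^m}=0$ a.e.\ on $S$, the integral above vanishes, $\nu_m(S)=0$, and therefore $f_\Psi\circ\Psi^m=0$ a.e., giving $f_\Psi\in\mathcal{N}(C_\Psi^m)$. Packaging both directions yields the claimed equality of $\mathcal{N}(C_\Psi^m)$ with $L_{(\phi,w)}(E_m)$.

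The step I expect to require the most care is the first one: justifying rigorously that $\|f_\Psi\circ\Psi^m\|_{\phi,w}=0$ forces $f_\Psi\circ\Psi^m=0$ $\nu$-a.e., since this is where the Orlicz--Lorentz structure (rearrangement and Luxemburg norm) actually enters, whereas the remaining steps are a routine application of non-singularity and the fact that a non-negative integral vanishes iff the integrand is null on the domain of integration. A secondary point worth stating explicitly is that all the set inclusions are interpreted modulo $\nu$-null sets, so that the identification $\mathcal{N}(C_\Psi^m)=L_{(\phi,w)}(E_m)$ is an equality of subspaces of equivalence classes rather than of honest functions.
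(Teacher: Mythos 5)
Your proposal is correct and takes essentially the same route as the paper: both reduce $f_\Psi\in\mathcal{N}(C_\Psi^m)$ to the vanishing of $\nu_m$ on the support of $f_\Psi$ via the Radon--Nikodym identity $\nu_m(S)=\int_S f_{\Psi^m}\,d\nu$ and the strict positivity of $f_{\Psi^m}$ off $E_m$. The only differences are organizational: you run a single chain of equivalences (and explicitly flag the fact that $\|g\|_{\phi,w}=0$ forces $g=0$ a.e., which the paper leaves implicit), whereas the paper proves the two inclusions separately and handles the reverse one by truncating over the sets where $f_{\Psi^m}>1/n$ instead of invoking directly that a non-negative function with zero integral is null.
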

\begin{proof}
	Let $f$ be an element of $L_{(\phi,w)}(E_m)$. Then
	$$\nu \{ x \in E : f\ o \ \Psi^ m(x) \neq 0 \} \leq \nu \ o \ \Psi^{-m}(E_m) = \int_{E^{m}} f_ { \Psi^{m}}(x) d\nu (x) = 0.$$
	This implies that $f\circ \Psi^m=0$ a.e. This gives that 	$f_\Psi \in \mathcal{N}(C_\Psi^m)$. This implies that
	\begin{align}
	L_{(\phi,w)}(E_m) \subseteq \mathcal{N}(C_\Psi^m). \label{eq:3.1}
	\end{align}
Also, let  $f_\Psi \in \mathcal{N}(C_\Psi^m)$, then $\nu \ o \ \Psi^{-m}\{x \in E: f_\Psi(x) \neq 0 \} = 0.$ Take $F=\{ x \in E/E_m : f_\Psi(x) \neq 0 \}$ and $G=\{x \in E_m : f_\Psi(x) \neq 0\}$.
	From (\ref{eq: 2.2}), we have
\begin{align*}
	0=\int_F f_{\Psi^{m}}(x) d\nu (x) + \int_G {f_{\Psi^{m}}}(x)d\nu(x)\\
	=\int_F h_{\Psi^{m}}(x) d\nu (x)\\
	 \geq \frac{1}{n} \int_{F_n \cap F} d\nu\\
	  = \frac{1}{n} \nu ( F_n \cap F)
\end{align*}
 for each $n$ where $F_n=\{x \in E /E_m: f_\Psi>\frac{1}{n}\} \subseteq F$. Since $F_n \cap F=F$, this gives that $\nu(F_n \cap F)=\nu(F)$ for each $n$. Therefore $\nu(F)=0$. This implies that $f=0$ a.e. on $E /E_m$. Hence $ f\in L_({\phi,w})(E_m)$. This gives that
 \begin{align}
 	\mathcal{N}(C_\Psi^m)\subseteq	L_{(\phi,w)}(E_m). \label{eq:3.2}
 \end{align}
From (\ref{eq:3.1}) and (\ref{eq:3.2}), we obtain
\begin{align*}
		\mathcal{N}(C_\Psi^m)=	L_{(\phi,w)}(E_m).
\end{align*}
\end{proof}

%Let $\Psi$ be a singular measurable transformation on the measure space $X$ that induces the composition operator $C_\Psi$ on the Lorentz space $L(p,q),1<p\leq\infty,1\leq q \leq \infty$.Then $\mathcal{N}(C_\Psi^{k})=\mathcal{N} (C_\Psi^{k+1})$ if and only if measures $\nu_k$ and $\nu_{k+1}$  are equivalent.

\begin{lemma}
	Let $\Psi$ be a non-singular measurable transformation on the measure space $(E,\mathcal{E},\nu)$ that induces the composition operator $C_\Psi$ on the Orlicz-Lorentz space $L_{(\phi,w)}$. Then $N(C^m_\Psi)=N(C^{m+1}_\Psi)$ if and only if $\nu_m$ and $\nu_{m+1}$ are equivalent.
\end{lemma}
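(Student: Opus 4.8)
The plan is to reduce both sides of the asserted equivalence to a single measure-theoretic condition --- that $E_m$ and $E_{m+1}$ agree up to a $\nu$-null set --- and then verify that each side is equivalent to this common condition. Throughout I would lean on the kernel description from the preceding theorem, $\mathcal{N}(C_\Psi^m)=L_{(\phi,w)}(E_m)$ and $\mathcal{N}(C_\Psi^{m+1})=L_{(\phi,w)}(E_{m+1})$, and on the chain of absolute continuities \eqref{eq:2.1}, which already supplies $\nu_{m+1}\ll\nu_m$. Consequently, the equivalence of $\nu_m$ and $\nu_{m+1}$ reduces to the single remaining relation $\nu_m\ll\nu_{m+1}$.

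First I would record the inclusion $E_m\subseteq E_{m+1}$ modulo null sets. Factoring the Radon--Nikodym density via $\nu_{m+1}\ll\nu_m$ as $f_{\Psi^{m+1}}=\frac{d\nu_{m+1}}{d\nu_m}\,f_{\Psi^m}$ shows that $f_{\Psi^m}=0$ forces $f_{\Psi^{m+1}}=0$ a.e., so $E_m\subseteq E_{m+1}$. This is simply the density-level shadow of the automatic inclusion $\mathcal{N}(C_\Psi^m)\subseteq\mathcal{N}(C_\Psi^{m+1})$. On the measure side I would then show that $\nu_m\ll\nu_{m+1}$ holds precisely when $E_{m+1}\subseteq E_m$ modulo null sets. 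If $\nu_{m+1}(S)=0$ then $\int_S f_{\Psi^{m+1}}\,d\nu=0$, so $S$ lies in $E_{m+1}$ up to a null set; granting $E_{m+1}\subseteq E_m$ forces $f_{\Psi^m}=0$ a.e. on $S$, whence $\nu_m(S)=0$. Conversely, applying $\nu_m\ll\nu_{m+1}$ to $S=E_{m+1}$ (on which $\nu_{m+1}$ vanishes by definition of $E_{m+1}$) gives $\nu_m(E_{m+1})=0$, that is $f_{\Psi^m}=0$ a.e. on $E_{m+1}$, which is exactly $E_{m+1}\subseteq E_m$. Combined with the inclusion just recorded, equivalence of $\nu_m$ and $\nu_{m+1}$ is thereby equivalent to $E_m=E_{m+1}$ a.e.

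For the kernel side I would prove $L_{(\phi,w)}(E_m)=L_{(\phi,w)}(E_{m+1})$ iff $E_m=E_{m+1}$ a.e. The inclusion $E_m\subseteq E_{m+1}$ gives one containment for free. The delicate direction, which I expect to be the main obstacle, is manufacturing a separating function when $\nu(E_{m+1}\setminus E_m)>0$: using $\sigma$-finiteness I would select $A\subseteq E_{m+1}\setminus E_m$ with $0<\nu(A)<\infty$ and check that $\chi_A\in L_{(\phi,w)}$, since its decreasing rearrangement is $\chi_{[0,\nu(A))}$ and the local integrability of $\omega$ renders $\phi(\alpha)\int_0^{\nu(A)}\omega(t)\,dt$ finite. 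This $\chi_A$ belongs to $L_{(\phi,w)}(E_{m+1})$ but not to $L_{(\phi,w)}(E_m)$, contradicting equality of the two kernels. Chaining the three equivalences back through the preceding theorem then delivers $\mathcal{N}(C_\Psi^m)=\mathcal{N}(C_\Psi^{m+1})$ iff $\nu_m$ and $\nu_{m+1}$ are equivalent.
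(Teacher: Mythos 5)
Your proof is correct and follows essentially the same route as the paper's: both reduce the statement to comparing the zero sets $E_m$ and $E_{m+1}$ of the Radon--Nikodym derivatives via the kernel description $\mathcal{N}(C_\Psi^m)=L_{(\phi,w)}(E_m)$, use the chain \eqref{eq:2.1} to dispose of $\nu_{m+1}\ll\nu_m$, and separate the kernels with the characteristic function of a finite-measure subset. Your organization --- funnelling both directions through the single condition $E_m=E_{m+1}$ up to null sets, and noting that one inclusion is automatic --- is somewhat tidier than the paper's, but the underlying ideas coincide.
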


\begin{proof}
 Let $C_\Psi \in \mathcal{B}(L_{(\phi,w)})$. Suppose that $\nu_m$ and $\nu_{m+1}$ are equivalent.
Then $\nu_{m+1} \ll \nu_m \ll \nu_{m+1}$. So from $(\ref{eq:2.1})$, we have $\nu_m \ll \nu_{m+1} \ll \nu$ and $\nu_{m+1} \ll \nu_m \ll \nu$, and hence, by the chain rule,
\begin{eqnarray*}
	f_{\Psi^m}(x)&=&\frac{d\nu_m}{d\nu_{m+1}}(x).f_{\Psi^{m+1}}(x)\\
	f_{\Psi^{m+1}}(x)&=&\frac{d\nu_{m+1}}{d\nu_m}(x).f_{\Psi^m}(x)
\end{eqnarray*}
This gives that $E_m=E_{m+1}$. Since $\mathcal{N}(C_\Psi^m)=	L_{(\phi,w)}(E_m)$ for $m \geq 0$, therefore $$\mathcal{N}(C_\Psi^m)=	L_{(\phi,w)}(E_m)=L_{(\phi,w)}(E_{m+1})=\mathcal{N}(C_\Psi^{m+1}).$$

Conversely, suppose that $N(C_\Psi^m)=N(C_\Psi^{m+1})$. This gives $L_{\phi,w}(E_m)=L_{\phi,w}(E^{m+1})$. We first claim that $\nu(E_m/E_{m+1})=0$. The assumption of $\nu(E_m/ E_{m+1})>0$ provides a set $F_n=\{x\in E_m: f_{\Psi^{m+1}}(x)>\frac{1}{n}, n \in \mathbb{N}\}$ of non-zero finite measure. Characteristic functions $\chi_S$ are dense in Orlicz-Lorentz space for
$S \in \mathcal{E}$.  Now,
\begin{align*}
	\| \chi_{F_{n}}\|=\frac{1}{\phi^{-1}(\frac{1}{\nu(F_n)})}.
\end{align*}
Thus, $\chi_{F_n} \in L_{(\phi,w)}(E_m)= L_{(\phi,w)}(E_{m+1})$
i.e., $\chi_{F_n}$ vanishes outside $E_{m+1}$ which gives that $F_n \subseteq E_{m+1}$. Therefore,
\begin{align*}
	0 \leq \frac{1}{n}\int_{F_n} \chi_{F_n} d \nu \leq \int_{F_n} f_{\Psi^{m+1}} d \nu =0.
\end{align*}
This implies that $\nu(F_n)=0$ which is a contradiction to our assumption. Similarly, $\nu(E_{m+1} / E_m)=0$. To show that $\nu_m$ and $\nu_{m+1}$ are equivalent, it is suffices to show that $\nu_m \ll \nu_{m+1}$. Suppose that $\nu_{m+1}(A)=0$ for $A \in \mathcal{E}$. This implies that for each subset $B$ of $A$, we have
\begin{align*}
	\int_B f_{\Psi^{m+1}} d\nu \leq \int_A f_{\Psi^{m+1}} d\nu=\nu_{m+1}(A)=0\\
	\Rightarrow \nu\{x \in A:f_{\Psi^m}(x)\neq 0, f_{\Psi^{m+1}}(x)\neq 0\}=0.
\end{align*}
Also, $\nu\{x \in A:f_{\Psi^m}(x)\neq 0, f_{\Psi^{m+1}}(x)= 0\}=0$ as it is a subset of $E_{m+1} / E_m$. Therefore,
\begin{align*}
	\nu_m(A)=\int_C f_{\Psi^m} d\nu=\int_D f_{\Psi^m} d\nu= 0 
\end{align*} 
where $C=\{x\in A:f_{\Psi^m}=0\}$ and $D=\{x\in A:f_{\Psi^m} \neq 0\}$.
\end{proof}
\begin{theorem}
	Let $\Psi$ be a non-singular measurable transformation on the
	measure space $E$ inducing the composition operator $C_\Psi$ on the Orlicz-Lorentz space $L_{(\phi,w)}$. A necessary and sufficient condition for  $\mathcal{A}(C_\Psi)=m$ is that $m$ is the least non-negative integer such that the
	measures $\nu_m$ and $\nu_{m+1}$ are equivalent.
\end{theorem}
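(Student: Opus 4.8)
The plan is to obtain the statement as an immediate consequence of the preceding Lemma, which supplies the key dictionary between the null-space chain of $C_\Psi$ and the equivalence of the derived measures $\nu_m$. Recall that $\mathcal{A}(C_\Psi)$ is by definition the smallest integer $m$ for which $N(C^m_\Psi) = N(C^{m+1}_\Psi)$, so the whole content of the theorem is to identify the first index at which the null spaces stabilize with the first index at which $\nu_m$ and $\nu_{m+1}$ become equivalent. I would therefore prove both implications by simply transporting the defining property of the ascent across the equivalence established in the Lemma.

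First I would treat the forward implication. Assuming $\mathcal{A}(C_\Psi) = m$, the definition of ascent gives that $m$ is the least integer with $N(C^m_\Psi) = N(C^{m+1}_\Psi)$, while $N(C^k_\Psi) \neq N(C^{k+1}_\Psi)$ for every $k < m$. Applying the Lemma at the index $m$ shows that $\nu_m$ and $\nu_{m+1}$ are equivalent, and applying its contrapositive at each $k < m$ shows that $\nu_k$ and $\nu_{k+1}$ are not equivalent. Hence $m$ is the least non-negative integer for which $\nu_m$ and $\nu_{m+1}$ are equivalent. For the converse I would run the Lemma in the reverse direction: if $m$ is the least index for which $\nu_m$ and $\nu_{m+1}$ are equivalent, then $N(C^m_\Psi) = N(C^{m+1}_\Psi)$, whereas the non-equivalence of $\nu_k$ and $\nu_{k+1}$ for $k < m$ forces $N(C^k_\Psi) \neq N(C^{k+1}_\Psi)$; by definition this is precisely the assertion $\mathcal{A}(C_\Psi) = m$.

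Since the argument is a direct translation through the Lemma, I do not expect any genuinely hard step; the main work was already done in proving the Lemma. The only point deserving care is the well-posedness of the notion of ascent: one should observe that once $N(C^m_\Psi) = N(C^{m+1}_\Psi)$, the monotone inclusions $N(C^k_\Psi) \subseteq N(C^{k+1}_\Psi)$ propagate the equality to all larger indices, so that ``the least such $m$'' is unambiguous and matches the least index of measure equivalence without conflict. It is also worth remarking in passing that the same reasoning yields $\mathcal{A}(C_\Psi) = \infty$ exactly when no consecutive pair $\nu_m, \nu_{m+1}$ is equivalent, which is consistent with the infinite-ascent clause of the definition.
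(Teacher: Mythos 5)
Your proposal is correct and matches the paper's intent exactly: the paper states this theorem immediately after Lemma 3.2 with no written proof, treating it as the direct translation of the definition of ascent through that lemma, which is precisely what you carry out. Your added remark on the propagation of equality of null spaces to larger indices is a harmless (and correct) bit of extra care.
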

	
	\begin{corollary}
		Let $C_\Psi \in \mathcal{B}(L_{(\phi,w)})$. Then the
	$\mathcal{A}(C_\Psi)=0$ in each of the following situations:
		\begin{enumerate}
			\item $\Psi$ is a measure preserving.
			\item  $\Psi$ is surjective.
		\end{enumerate}
	\end{corollary}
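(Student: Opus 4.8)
The plan is to deduce both statements from the preceding Theorem, which identifies $\mathcal{A}(C_\Psi)=m$ with $m$ being the least non-negative integer for which $\nu_m$ and $\nu_{m+1}$ are equivalent. Since $0$ is the smallest non-negative integer, it suffices in each case to check that $\nu_0$ and $\nu_1$ are equivalent; once this holds, $m=0$ is automatically the least such integer and $\mathcal{A}(C_\Psi)=0$. Now $\nu_0=\nu\circ\Psi^{0}=\nu$ while $\nu_1=\nu\circ\Psi^{-1}$, and because $\Psi$ is non-singular we already have $\nu_1\ll\nu_0$ from (\ref{eq:2.1}). Hence the only thing left to establish in either situation is the reverse absolute continuity $\nu_0\ll\nu_1$, i.e. $\nu\ll\nu\circ\Psi^{-1}$, which is exactly the pre-positivity condition defined above.

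For part (1), if $\Psi$ is measure preserving then by definition $\nu_1(A)=\nu(\Psi^{-1}(A))=\nu(A)=\nu_0(A)$ for every $A\in\mathcal{E}$, so $\nu_0$ and $\nu_1$ are not merely equivalent but identical. The required equivalence is immediate, and the Theorem gives $\mathcal{A}(C_\Psi)=0$.

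For part (2), I would argue that surjectivity yields $\nu\ll\nu\circ\Psi^{-1}$. Take $A\in\mathcal{E}$ with $\nu_1(A)=\nu(\Psi^{-1}(A))=0$; surjectivity of $\Psi$ gives $\Psi(\Psi^{-1}(A))=A$, exhibiting $A$ as the $\Psi$-image of the null set $\Psi^{-1}(A)$, and pushing this null set forward should force $\nu(A)=0$. An equivalent and perhaps cleaner route is through the Radon-Nikodym derivative: by the computation used in the Lemma, $\nu_0\sim\nu_1$ is the same as $E_0=E_1$, and since $f_{\Psi^{0}}\equiv 1$ we have $E_0=\varnothing$, so the whole claim collapses to showing $f_\Psi>0$ a.e., i.e. that $\Psi$ is pre-positive.

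The main obstacle is the surjective case. Surjectivity by itself only guarantees that $\Psi^{-1}(A)$ is nonempty when $A$ is, and the step ``image of a null set is null'' is not automatic for an arbitrary measurable map; it requires $\Psi$ (equivalently, $\Psi^{-1}$ viewed as a point transformation) to carry null sets to null sets. I would therefore make this reading explicit, interpreting surjectivity together with bi-measurability and non-singularity of the inverse, and it is precisely under that interpretation that $\nu(\Psi^{-1}(A))=0\Rightarrow\nu(A)=0$ goes through. This delivers $\nu_0\sim\nu_1$, and hence $\mathcal{A}(C_\Psi)=0$ by the Theorem.
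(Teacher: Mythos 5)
Your reduction is the right one: since $\nu_1=\nu\circ\Psi^{-1}\ll\nu=\nu_0$ always holds by non-singularity, $\mathcal{A}(C_\Psi)=0$ comes down to the single implication $\nu(\Psi^{-1}(A))=0\Rightarrow\nu(A)=0$, i.e.\ to $\Psi$ being pre-positive, equivalently $f_\Psi>0$ a.e., equivalently $E_1$ null and $C_\Psi$ injective. Part (1) is complete: measure preservation gives $\nu_1=\nu_0$ outright. You have also correctly located the genuine gap in part (2), and it is worth being blunt about it: surjectivity alone does \emph{not} imply pre-positivity for a general $\sigma$-finite measure space, and the step you flag (``$A=\Psi(\Psi^{-1}(A))$ is the image of a null set, hence null'') really does fail. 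Concretely, on $E=[0,1]$ with Lebesgue measure let $C\subseteq[3/4,1]$ be a null Cantor set, put $\Psi(x)=x/2$ off $C$ and let $\Psi|_C$ be a Borel surjection of $C$ onto $[1/2,1]\cup\tfrac{1}{2}C$; then $\Psi$ is surjective, non-singular, and induces a bounded $C_\Psi$ (indeed $\nu\Psi^{-1}\le 2\nu$), yet $\Psi^{-1}([1/2,1])$ is null while $\nu([1/2,1])=1/2$, so $\chi_{[1/2,1]}\in N(C_\Psi)\setminus\{0\}$ and the ascent is not $0$.

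Your proposed repair (demanding that $\Psi$ carry null sets to null sets, or bi-measurability with a non-singular inverse) does close the gap, but it is an added hypothesis rather than a proof of the statement as written. The cleaner way to salvage part (2) within the framework of this paper is to assume that every nonempty measurable set has positive measure (as holds for the counting-measure setting $\ell_{(\phi,\omega)}$, and as is implicitly assumed in Corollary 4.1): then $\nu(A)>0$ forces $A\neq\varnothing$, surjectivity forces $\Psi^{-1}(A)\neq\varnothing$, hence $\nu(\Psi^{-1}(A))>0$, so $\Psi$ is pre-positive and Theorem 3.7 (or your $\nu_0\sim\nu_1$ argument) gives $\mathcal{A}(C_\Psi)=0$. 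State whichever extra hypothesis you adopt explicitly; without one, part (2) is false.
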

Theorem 3.3 can be restated in the following manner:
\begin{theorem}
The	necessary and sufficient condition for a composition operator $C_\Psi$ on Orlicz-Lorentz space $L_{(\phi,w)}$ to have $\mathcal{A}(C_\Psi)=\infty$ is that the measures $\nu_m$ and $\nu_{m+1}$ can never be equivalent for any natural number $m$.
\end{theorem}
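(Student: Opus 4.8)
The plan is to read off this statement as the logical negation, over all indices, of the stabilization criterion supplied by the preceding Lemma; indeed it is nothing more than the contrapositive reformulation of Theorem~3.3 for the case in which the ascent diverges. The single substantive ingredient is the Lemma, which identifies the equality $N(C_\Psi^m)=N(C_\Psi^{m+1})$ with the equivalence of the measures $\nu_m$ and $\nu_{m+1}$. Feeding this equivalence, applied separately for each admissible $m$, into the definition of the ascent --- by which $\mathcal{A}(C_\Psi)=\infty$ means that no integer $m$ realizes $N(C_\Psi^m)=N(C_\Psi^{m+1})$ --- should deliver both implications with no genuine computation.

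For the forward implication I would assume $\mathcal{A}(C_\Psi)=\infty$, so that by definition no integer $m$ satisfies $N(C_\Psi^m)=N(C_\Psi^{m+1})$; equivalently, the chain of kernels is strictly increasing, with $N(C_\Psi^m)\subsetneq N(C_\Psi^{m+1})$ at every stage. Applying the Lemma contrapositively at each fixed $m$, the strictness of this inclusion is equivalent to $\nu_m$ and $\nu_{m+1}$ being inequivalent. Letting $m$ range over all natural numbers then shows that $\nu_m$ and $\nu_{m+1}$ can never be equivalent, which is the asserted condition.

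The converse is the same chain of equivalences traversed in the opposite direction: if $\nu_m$ and $\nu_{m+1}$ fail to be equivalent for every natural number $m$, then the Lemma forces $N(C_\Psi^m)\neq N(C_\Psi^{m+1})$ for every $m$, so no integer can stabilize the kernel chain, and the definition of the ascent yields $\mathcal{A}(C_\Psi)=\infty$.

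Because each step is a biconditional drawn directly from the Lemma, there is no analytic obstacle here; the only point demanding care is the bookkeeping of the index range, ensuring that the quantifier over $m$ governing the ascent condition is correctly negated against the quantifier over $m$ governing the measures. In particular one should fix whether the base index $m=0$ is admitted, noting that it too is covered by the Lemma: since $\nu_0=\nu$ forces $f_{\Psi^0}\equiv 1$ and hence $E_0=\emptyset$, the equality $N(C_\Psi^0)=N(C_\Psi^1)$ corresponds exactly to $\nu_0\sim\nu_1$, so the initial index introduces no exceptional behaviour and the quantifier negation goes through uniformly.
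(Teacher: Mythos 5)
Your proposal is correct and matches the paper's own treatment: the paper offers no separate argument for this theorem, explicitly presenting it as a restatement of Theorem 3.3, which in turn rests on Lemma 3.2 identifying $N(C_\Psi^m)=N(C_\Psi^{m+1})$ with the equivalence of $\nu_m$ and $\nu_{m+1}$. Your negation of that criterion over all indices, including the check at $m=0$, is exactly the intended reading.
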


\begin{theorem}
A sufficient condition for an operator $C_\Psi \in \mathcal{B}(L_{(\phi,w)})$, induced by a measurable transformation $\Psi$, to have $\mathcal{A}(C_\Psi)< \infty$ is that $\Psi$ is a pre-positive measurable transformation.\end{theorem}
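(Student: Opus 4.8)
The plan is to reduce the statement to the ascent characterization of Theorem 3.3, which identifies $\mathcal{A}(C_\Psi)$ with the least non-negative integer $m$ for which the measures $\nu_m$ and $\nu_{m+1}$ are equivalent. Hence it suffices to exhibit a single $m$ with $\nu_m \sim \nu_{m+1}$, and then $\mathcal{A}(C_\Psi)$ is automatically finite. The chain of absolute continuities recorded in \eqref{eq:2.1} already furnishes $\nu_{m+1} \ll \nu_m$ for every $m \geq 0$, so the one relation still to be proved is the reverse inclusion $\nu_m \ll \nu_{m+1}$, and this is precisely where the pre-positivity of $\Psi$ must enter.

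First I would fix $m \geq 0$ and establish $\nu_m \ll \nu_{m+1}$ by contraposition: given a measurable set $A$ with $\nu_m(A) > 0$, the goal is $\nu_{m+1}(A) > 0$. Writing $\nu_m(A) = \nu(\Psi^{-m}(A))$ and setting $B = \Psi^{-m}(A)$, the hypothesis becomes $\nu(B) > 0$, and pre-positivity applied to $B$ yields $\nu(\Psi^{-1}(B)) > 0$. The identity $\Psi^{-1}(B) = \Psi^{-1}(\Psi^{-m}(A)) = \Psi^{-(m+1)}(A)$, which is just the recursive definition of $\nu_{m+1}$ recorded immediately before \eqref{eq:2.1}, then gives $\nu_{m+1}(A) = \nu(\Psi^{-(m+1)}(A)) > 0$. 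Reading this contrapositively, $\nu_{m+1}(A) = 0$ forces $\nu_m(A) = 0$, that is, $\nu_m \ll \nu_{m+1}$.

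Combining the two directions shows that $\nu_m$ and $\nu_{m+1}$ are in fact equivalent for every $m \geq 0$; applying this at $m = 0$, where $\nu_0 = \nu$, already produces an equivalence at the first stage, so Theorem 3.3 gives $\mathcal{A}(C_\Psi) = 0$ and a fortiori $\mathcal{A}(C_\Psi) < \infty$. I do not expect a serious obstacle: the whole argument is a careful bookkeeping of absolute continuity, the decisive point being the direction of the implication---pre-positivity propagates positivity of measure forward under $\Psi^{-1}$, and it is exactly this that converts the one-sided absolute continuity of \eqref{eq:2.1} into a two-sided equivalence. The only subtlety worth flagging is the degenerate index $m = 0$: one should confirm that invoking Theorem 3.3 (and the underlying Lemma 3.2) at $m = 0$ is compatible with the convention adopted for the ascent, so that the conclusion $\mathcal{A}(C_\Psi) = 0$ is legitimate rather than an artifact of the indexing.
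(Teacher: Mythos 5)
Your argument is correct, but it takes a genuinely different route from the paper's. The paper proves the theorem directly at the level of null spaces, by contradiction: assuming $\mathcal{A}(C_\Psi)=\infty$, for each $m$ it selects $f_m\in\mathcal{N}(C_\Psi^{m})\setminus\mathcal{N}(C_\Psi^{m-1})$, so that $E_m=\{x\in E: f_m\circ\Psi^{m-1}(x)\neq 0\}$ has positive measure; pre-positivity gives $\nu(\Psi^{-1}(E_m))>0$, and since $\Psi^{-1}(E_m)=\{x\in E: f_m\circ\Psi^{m}(x)\neq 0\}$ this contradicts $f_m\in\mathcal{N}(C_\Psi^{m})$. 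You instead reduce to Theorem 3.3 and show that pre-positivity upgrades the one-sided chain \eqref{eq:2.1} to a two-sided equivalence of $\nu_m$ and $\nu_{m+1}$ for every $m$, via the identity $\Psi^{-(m+1)}(A)=\Psi^{-1}(\Psi^{-m}(A))$. The essential use of the hypothesis is the same in both proofs---one application of pre-positivity pushes positivity of measure from level $m$ to level $m+1$---but your version avoids the contradiction scaffolding and the bookkeeping of individual functions, and it delivers the sharper conclusion $\mathcal{A}(C_\Psi)=0$, which the paper records separately as Theorem 3.7. The price is dependence on Theorem 3.3 (hence on Theorem 3.1 and Lemma 3.2, which require $\Psi$ non-singular; that is implicit here since $C_\Psi$ is assumed bounded on $L_{(\phi,w)}$), whereas the paper's argument is self-contained. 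Your caution about the index $m=0$ is warranted: Definition 1 requires $m>0$, which sits uneasily with the paper's own Corollary 3.1 and Theorem 3.7 asserting ascent $0$; but under either convention your argument closes, since the equivalence of $\nu_m$ and $\nu_{m+1}$ holds for all $m$, in particular for $m=1$.
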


\begin{proof} Let $\Psi$ be a pre-positive measurable transformation inducing $C_\Psi$ on $L_{(\phi,w)}$. Let us suppose that $\mathcal{A}(C_\Psi) \nless \infty$, then for each natural number $m$, there exists $f_m \in \mathcal{N}(C_\Psi^m)$ such that
	 $$ \nu\left\{x \in E: f_{\psi^m} \circ \Psi^{m-1}(x)\neq 0\right\}>0.$$ 
	 Since $\Psi$ is pre-positive, so $\nu(\Psi^{-1}(E_m)>0$ where $$ \nu(\Psi^{-1}(E_m)= \nu(\left\{x \in E: (f_{\Psi^m} \circ \Psi^m) (x) \neq 0\right\}),$$ which is a contradiction to the fact that $f_{\Psi^m} \in \mathcal{N}(C_\Psi^m)$. Therefore, it follows that  $\mathcal{A}(C_\Psi)< \infty$.\end{proof}

\begin{theorem}
If the measurable transformation $\Psi$ inducing $C_\Psi \in \mathcal{B}(L_{(\phi,w)})$, is pre-positive, then 
$\mathcal{A}(C_\Psi)=0$.
\end{theorem}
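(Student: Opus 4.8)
The plan is to reduce the statement to the ascent characterization already established, namely that $\mathcal{A}(C_\Psi)=m$ exactly when $m$ is the least non-negative integer for which $\nu_m$ and $\nu_{m+1}$ are equivalent. Evaluated at the bottom level, this says $\mathcal{A}(C_\Psi)=0$ precisely when $\nu_0$ and $\nu_1$ are equivalent, since $0$ is automatically the least non-negative integer available. Recalling that $\nu_0=\nu$ and $\nu_1=\nu\Psi^{-1}$, it therefore suffices to prove $\nu\ll\nu\Psi^{-1}\ll\nu$.

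First I would observe that one of the two absolute-continuity relations is automatic: the non-singularity of $\Psi$, recorded in the chain (\ref{eq:2.1}), already yields $\nu_1=\nu\Psi^{-1}\ll\nu=\nu_0$. Consequently the entire content of the theorem is carried by the reverse relation $\nu_0\ll\nu_1$, that is, $\nu\ll\nu\Psi^{-1}$.

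The key step is to recognize that pre-positivity is precisely this reverse relation in contrapositive form. By definition $\Psi$ is pre-positive when $\nu(A)>0$ implies $\nu(\Psi^{-1}(A))>0$; contrapositively, $\nu(\Psi^{-1}(A))=0$ implies $\nu(A)=0$. Since $\nu\Psi^{-1}(A)=\nu(\Psi^{-1}(A))$, this is exactly $\nu\ll\nu\Psi^{-1}$. Combining the two directions gives equivalence of $\nu_0$ and $\nu_1$, whence $\mathcal{A}(C_\Psi)=0$.

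I do not anticipate a genuine obstacle here; the only point requiring care is the bookkeeping of which hypothesis supplies which direction of absolute continuity. As an independent check one may bypass the characterization theorem altogether and argue injectivity directly: if $f\circ\Psi=0$ almost everywhere, then $\Psi^{-1}(\{f\neq 0\})$ is $\nu$-null, so pre-positivity forces $\{f\neq 0\}$ to be null and hence $f=0$ in $L_{(\phi,w)}$; thus $\mathcal{N}(C_\Psi)=\{0\}=\mathcal{N}(C_\Psi^0)$, which is exactly the assertion $\mathcal{A}(C_\Psi)=0$.
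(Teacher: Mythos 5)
Your proposal is correct. Note that the paper states this theorem without any proof, evidently treating it as an immediate consequence of the earlier characterization ($\mathcal{A}(C_\Psi)=m$ iff $m$ is the least non-negative integer with $\nu_m$ and $\nu_{m+1}$ equivalent); your reduction to the $m=0$ case --- non-singularity giving $\nu\Psi^{-1}\ll\nu$ and pre-positivity being, in contrapositive form, exactly $\nu\ll\nu\Psi^{-1}$ --- is precisely the intended argument, and it correctly supplies the missing details. Your alternative direct check that $\mathcal{N}(C_\Psi)=\{0\}$ (if $f\circ\Psi=0$ a.e.\ then $\Psi^{-1}(\{f\neq 0\})$ is null, so pre-positivity forces $\{f\neq 0\}$ to be null) is also valid and has the advantage of bypassing the characterization theorem entirely, which matters here since that theorem is itself stated without proof in the paper. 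The only caveat worth flagging is one the paper itself is inconsistent about: its formal definition of ascent requires $m>0$, so the assertion $\mathcal{A}(C_\Psi)=0$ tacitly uses the convention $T^0=I$, $\mathcal{N}(T^0)=\{0\}$; your argument uses the same convention as the paper does elsewhere, so this is not a gap on your side.
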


\begin{theorem}
Let $C_\Psi \in \mathcal{B}(L_{(\phi,w)})$. If there exists a sequence $\{A\}_{m}$ of measurable sets such that for each $m$, $0< \nu(A_m) <\infty$, $\nu(\Psi^{-m}(A_m)) = 0$ and $\nu(\Psi^{-(m-1)} (A_m))\neq 0$, then $\mathcal{A}(C_\Psi)=\infty$.\end{theorem}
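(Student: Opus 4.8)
The plan is to reduce everything to the infinite-ascent criterion recorded in Theorem~3.5, which asserts that $\mathcal{A}(C_\Psi)=\infty$ precisely when $\nu_m$ and $\nu_{m+1}$ fail to be equivalent for every natural number $m$. The decisive observation is that, by the chain of absolute continuities (2.1), one always has $\nu_{m}\ll\nu_{m-1}$; consequently the pair $(\nu_{m-1},\nu_m)$ can be non-equivalent only through failure of the reverse relation $\nu_{m-1}\ll\nu_m$. Thus it suffices to produce, for each $m$, a single set that is $\nu_m$-null yet carries positive $\nu_{m-1}$-measure, and the hypothesis hands us exactly such sets.

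Concretely, I would first restate the three conditions on $A_m$ in terms of the pushforward measures $\nu_k=\nu\circ\Psi^{-k}$. The identity $\nu(\Psi^{-m}(A_m))=0$ becomes $\nu_m(A_m)=0$, while $\nu(\Psi^{-(m-1)}(A_m))\neq 0$ becomes $\nu_{m-1}(A_m)>0$; together they say that $A_m$ is a $\nu_m$-null set of positive $\nu_{m-1}$-measure, so $\nu_{m-1}\not\ll\nu_m$ and hence $\nu_{m-1}$ and $\nu_m$ are not equivalent. Letting $m$ run over all naturals shows that no consecutive pair is equivalent, and Theorem~3.5 then gives $\mathcal{A}(C_\Psi)=\infty$.

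An equivalent route, which exhibits the failure of finite ascent directly at the level of kernels, passes through the description $\mathcal{N}(C_\Psi^k)=L_{(\phi,w)}(E_k)$ of Theorem~3.1, where $E_k=\{f_{\Psi^k}=0\}$. From $\nu_m(A_m)=\int_{A_m}f_{\Psi^m}\,d\nu=0$ one reads off $f_{\Psi^m}=0$ a.e.\ on $A_m$, so $A_m\subseteq E_m$ up to a null set; from $\int_{A_m}f_{\Psi^{m-1}}\,d\nu>0$ one finds a positive-measure subset $B_m\subseteq A_m$ on which $f_{\Psi^{m-1}}>0$, so $B_m\subseteq E_m\setminus E_{m-1}$. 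Since $0<\nu(B_m)\le\nu(A_m)<\infty$, the characteristic function $\chi_{B_m}$ is a nonzero element of $L_{(\phi,w)}$ (with norm as in Lemma~3.2); it lies in $L_{(\phi,w)}(E_m)=\mathcal{N}(C_\Psi^m)$ but not in $L_{(\phi,w)}(E_{m-1})=\mathcal{N}(C_\Psi^{m-1})$, yielding the strict inclusion $\mathcal{N}(C_\Psi^{m-1})\subsetneq\mathcal{N}(C_\Psi^m)$ for every $m$, i.e.\ infinite ascent.

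Because the argument is in essence a translation between the three formulations of the hypothesis, I do not expect a substantive obstacle. The only points demanding care are bookkeeping ones: keeping straight that the direction $\nu_m\ll\nu_{m-1}$ from (2.1) holds automatically, so that the hypothesis genuinely contradicts equivalence rather than a relation that is already true, and verifying that $\chi_{B_m}$ is a legitimate nonzero member of the space — which is exactly where the finiteness $\nu(A_m)<\infty$ and the positivity $\nu(A_m)>0$ are used.
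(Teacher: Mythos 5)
Your proposal is correct, and it actually contains two valid arguments, one of which coincides with the paper's and one of which does not. Your second route is in essence the paper's proof: the paper simply notes that $0<\nu(A_m)<\infty$ makes $\chi_{A_m}$ a legitimate nonzero element of $L_{(\phi,w)}$ (via the norm formula $\|\chi_S\|=1/\phi^{-1}(1/\nu(S))$) and then places $\chi_{A_m}$ in $\mathcal{N}(C_\Psi^m)\setminus\mathcal{N}(C_\Psi^{m-1})$ directly, since $C_\Psi^{m}\chi_{A_m}=\chi_{\Psi^{-m}(A_m)}$ vanishes a.e.\ while $C_\Psi^{m-1}\chi_{A_m}=\chi_{\Psi^{-(m-1)}(A_m)}$ does not; you reach the same strict inclusions but detour through the Radon--Nikodym description $\mathcal{N}(C_\Psi^k)=L_{(\phi,w)}(E_k)$ of Theorem~3.1 and shrink $A_m$ to a subset $B_m$ on which $f_{\Psi^{m-1}}>0$, a refinement the paper does not need because it uses the preimages rather than the derivatives. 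Your first route --- translating the hypotheses into $\nu_m(A_m)=0$ and $\nu_{m-1}(A_m)>0$, concluding $\nu_{m-1}\not\ll\nu_m$ and hence non-equivalence of every consecutive pair, and invoking Theorem~3.5 --- is a genuinely different and arguably cleaner reduction; it buys brevity and makes transparent exactly which direction of absolute continuity fails, at the cost of leaning on the equivalence criterion (Lemma~3.2/Theorems~3.3 and~3.5) rather than exhibiting the witnesses in the kernels explicitly. Both arguments are sound; the only care points you flag (nontriviality of the characteristic functions, and that $\nu_m\ll\nu_{m-1}$ holds automatically so the hypothesis truly contradicts equivalence) are handled correctly.
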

\begin{proof}Let $\{A_m\}_{m\geq 1}$ be a sequence of measurable sets satisfying $0<\nu(A_m)<\infty$, $(\Psi^{-m}(A_m)) = 0$ and $\nu(\Psi^{-(m-1)} (A_m))\neq 0$ for each $m$. For each measurable set $S$, we know that

 $$ \| \chi_S\| =\frac{1}{\phi^{-1}(\frac{1}{\nu(S)})}.$$

 Hence, for each natural number $m$, the characteristics function $ \mathcal{X}_{A_m} \in L_{(\phi,w)}$ and $\chi_{A_m} \in (\mathcal{N}(C_\Psi^m) / \mathcal{N}(C_\Psi^{m-1}))$. Therefore, $\mathcal{A}(C_\Psi)=\infty$.
\end{proof} 
\begin{theorem}
Let the measurable transformation $\Psi$ on $E$ inducing $C_\Psi$ on $L_{(\phi, w)}$ be such that the image of each measurable set is measurable. If $\mathcal{A}(C_\Psi)\nless \infty$ on $L_{(\phi, w)}$, then there exists a sequence of subsets ${A_m}$ of $E$ such that for all $m>1$,
\begin{enumerate}
	%[label=(\roman*)]
	\item $0 <\nu(A_m) <\infty$,
	\item $A_m \subseteq \Psi^{m-1}(B)$ for some $B\in \mathcal{E}$,
	\item $A_m\notin \{\Psi^m(S) : S\in \mathcal{E}\ and\ \nu(S) >0\}$. 
\end{enumerate}\end{theorem}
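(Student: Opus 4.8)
The plan is to read the hypothesis $\mathcal{A}(C_\Psi)\nless\infty$ through the sets $E_m=\{x\in E: f_{\Psi^m}(x)=0\}$ that already govern the kernels, and then to manufacture the $A_m$ as positive-finite-measure pieces of $E_m\setminus E_{m-1}$ that in addition lie in the range of $\Psi^{m-1}$. The three required properties are then, respectively, a size condition, a ``range-side'' companion to the preimage conditions $\nu(\Psi^{-(m-1)}(A_m))\neq 0$, $\nu(\Psi^{-m}(A_m))=0$ that drive the preceding theorem.

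First I would record the structural meaning of infinite ascent. By the Lemma, $N(C_\Psi^m)=N(C_\Psi^{m+1})$ holds exactly when $\nu_m$ and $\nu_{m+1}$ are equivalent, and its proof identifies this with $E_m=E_{m+1}$; together with the chain (\ref{eq:2.1}), which forces $\nu_{m+1}\ll\nu_m$ and hence $E_m\subseteq E_{m+1}$ up to $\nu$-null sets, the assumption $\mathcal{A}(C_\Psi)=\infty$ means precisely that $\nu(E_m\setminus E_{m-1})>0$ for every $m\geq 1$. Fix $m>1$ and set $\widetilde{A}_m=E_m\setminus E_{m-1}$. Since $\widetilde{A}_m\subseteq E_m$ we have $f_{\Psi^m}=0$ on it, so $\nu(\Psi^{-m}(\widetilde{A}_m))=\nu_m(\widetilde{A}_m)=\int_{\widetilde{A}_m}f_{\Psi^m}\,d\nu=0$; since $\widetilde{A}_m\subseteq E_{m-1}^{c}$ we have $f_{\Psi^{m-1}}>0$ on it, so $\nu(\Psi^{-(m-1)}(\widetilde{A}_m))=\int_{\widetilde{A}_m}f_{\Psi^{m-1}}\,d\nu>0$. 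Thus $\widetilde{A}_m$ already realizes the two preimage conditions, and I only need to refine it so that it sits inside an image of $\Psi^{m-1}$.

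Next I would invoke the standing hypothesis that $\Psi$ carries measurable sets to measurable sets, so that $R:=\Psi^{m-1}(E)\in\mathcal{E}$. A point of $\widetilde{A}_m$ lying outside $R$ has empty $\Psi^{m-1}$-preimage, whence $\Psi^{-(m-1)}(\widetilde{A}_m)=\Psi^{-(m-1)}(\widetilde{A}_m\cap R)$, so the latter set still has positive measure. If $\nu(\widetilde{A}_m\cap R)=0$ were true, then non-singularity of $\Psi^{m-1}$ would give $\nu(\Psi^{-(m-1)}(\widetilde{A}_m\cap R))=0$, contradicting the previous line; hence $\nu(\widetilde{A}_m\cap R)>0$. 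Using $\sigma$-finiteness of $\nu$ I then shrink $\widetilde{A}_m\cap R$ to a subset $A_m$ with $0<\nu(A_m)<\infty$, which is property (1); shrinking retains both $A_m\subseteq R$ and $A_m\subseteq E_m$.

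It then remains to check (2) and (3). For (2), each $y\in A_m\subseteq R$ has the form $y=\Psi^{m-1}(z)$ with $z\in\Psi^{-(m-1)}(A_m)$, so $A_m\subseteq\Psi^{m-1}(B)$ with $B=\Psi^{-(m-1)}(A_m)\in\mathcal{E}$ (equivalently $B=E$). For (3), if $A_m=\Psi^m(S)$ with $\nu(S)>0$, then every $x\in S$ satisfies $\Psi^m(x)\in A_m$, so $S\subseteq\Psi^{-m}(A_m)$ and $\nu(S)\leq\nu(\Psi^{-m}(A_m))=0$, since $A_m\subseteq E_m$ forces $\nu(\Psi^{-m}(A_m))=0$ as above --- a contradiction. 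I expect the one genuinely load-bearing step to be the positivity $\nu(\widetilde{A}_m\cap R)>0$: this is the only place where the measurability-of-images hypothesis (needed to make $R$ a legitimate member of $\mathcal{E}$) and the non-singularity of $\Psi^{m-1}$ must be used together, and it is where a purely set-theoretic image/preimage identity has to be upgraded to a measure statement, so the a.e.\ bookkeeping around $E_{m-1}\subseteq E_m$ deserves care.
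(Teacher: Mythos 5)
Your proof is correct, but it follows a genuinely different route from the paper's. The paper argues directly from a witness function: infinite ascent gives, for each $m$, some $f\in\mathcal{N}(C_\Psi^{m})\setminus\mathcal{N}(C_\Psi^{m-1})$; the set $\{x: f\circ\Psi^{m-1}(x)\neq 0\}$ has positive measure, its image under $\Psi^{m-1}$ is measurable (by hypothesis) and of positive measure (by non-singularity, via the inclusion of the set into the preimage of its image), and a finite-positive-measure piece of that image on which $f\neq 0$ serves as $A_m$; property (3) then follows because $A_m=\Psi^m(S)$ would force $f\circ\Psi^m\neq 0$ on $S$ while $f\in\mathcal{N}(C_\Psi^m)$ gives $f\circ\Psi^m=0$ a.e. You instead stay entirely on the measure-theoretic side, reading infinite ascent through Theorem 3.2 and Lemma 3.3 as $\nu(E_m\setminus E_{m-1})>0$ for the zero sets of the Radon--Nikodym derivatives, and then deriving (3) from $\nu(\Psi^{-m}(A_m))=\nu_m(A_m)=\int_{A_m}f_{\Psi^m}\,d\nu=0$ rather than from a vanishing composition. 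Both proofs use image-measurability plus non-singularity at the same load-bearing point (to get a positive-measure set inside the range of $\Psi^{m-1}$ --- the paper applies the preimage-of-image trick to $\Psi^{m-1}(E_m)$, you apply it to $\widetilde{A}_m\cap\Psi^{m-1}(E)$), and both shrink by $\sigma$-finiteness. What your version buys: it avoids the paper's notational collision (where $f_{\Psi^m}$ and $E_m$ each denote two different objects in the same proof), it makes explicit that your $A_m$ also satisfies the preimage conditions $\nu(\Psi^{-m}(A_m))=0$ and $\nu(\Psi^{-(m-1)}(A_m))\neq 0$ of Theorem 3.8, so the two theorems are visibly near-converses, and it isolates the a.e.\ inclusion $E_{m-1}\subseteq E_m$ as the one bookkeeping point needing care. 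What the paper's version buys is brevity: it needs no appeal to the kernel characterization or to the chain $\nu_m\ll\nu_{m-1}$.
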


\begin{proof}
Let $\mathcal{A}(C_\Psi)= \infty$. Then, for each positive integer $m$, we have $\mathcal{N}(C_\Psi^{m-1}) \subset \mathcal{N}(C_\Psi^m)$. This implies that there exists $f_{\Psi^m} \in \mathcal{N}(C_\Psi^m)$ such that $f_{\Psi^m} \notin \mathcal{N}(C_\Psi^{m-1})$. Take $ E_m= \left\{x \in E : f_{\Psi^m} \circ \Psi^{m-1}(x) \neq 0\right\}$. Clearly, $\nu(E_m) > 0$ and $\Psi^{m-1}(E_m)$ is measurable. Now, we show that $\nu(\Psi^{m-1}(E_m)) > 0$. Let us suppose that $\nu(\Psi^{m-1}(E_m)) = 0$. Since $\Psi$ is non-singular and $E_m \subseteq \Psi^{-(m-1)} (\Psi^{m-1} (E_m))$, we get
\[\nu(E_m)\leq \nu(\Psi^{-(m-1)}(\Psi^{m-1}(E_m)))=0,\]
which is a contradiction. Hence, $\nu(\Psi^{m-1}(E_m))>0$.

Now, since the measure $\nu$ is $\sigma$-finite, we can choose a measurable subset $A_m$ of $\Psi^{m-1}(E_m)$ such that $0 <\nu (A_m) <\infty$  where $x \in A_m$ such that $f_{\Psi^m}(x) \neq 0$. 
%This gives that $A_m = \{x \in  A_m : f_{\Psi^m}(x) \neq 0\}$. 
Next, let if possible that $A_m = \Psi^m (S)$ for some measurable set $S$ having positive measure, then
\[\nu(\{x \in S: f_{\Psi^m} \circ \Psi^m (x) = 0\}) = 0.\]
Since $f_{\Psi^m} \in \mathcal{N}(C_\Psi^m), \nu(\{x \in S: (f_{\Psi^m} \circ \Psi^m) (x)\neq 0\}) = 0$. The above two sets each having measure zero mean that $\nu(S) = 0$, this contradicts the fact that $\nu(S) > 0$. Hence the proof.\end{proof}

If we put $E = \mathbb{N}$, $\mathcal{E} = 2^{\mathbb{N}}$ and $\nu$ is the counting measure,  then the corresponding
Orlicz-Lorentz space is called as Orlicz-Lorentz sequence space and is denoted
by $l_{(\phi,\omega)}$\cite{3}. The following theorem gives the characterization of the composition
operators to have infinite ascent on the Orlicz-Lorentz sequence spaces $l_{(\phi,\omega)}$.

\begin{theorem}A necessary and sufficient condition for the composition operator $C_\Psi$ on the Orlicz-Lorentz sequence space $l_{(\phi,\omega)}$  induced by $\Psi: \mathbb{N} \to \mathbb{N}$ to have $\mathcal{A}(C_\Psi)=\infty$ is that there exists a sequence of distinct natural numbers $\langle n_m \rangle$ such that $n_m \not\in \Psi^m(\mathbb{N})$  but $n_m \in \Psi^{m-1}(\mathbb{N})$ for each $m \geq 1$.\end{theorem}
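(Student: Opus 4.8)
The plan is to specialise the equivalence criterion already obtained for general $\sigma$-finite spaces to the counting measure on $\mathbb{N}$, where $f_{\Psi^m}$ and the sets $E_m$ admit a purely combinatorial description in terms of the ranges $\Psi^m(\mathbb{N})$. First I would compute the Radon--Nikodym derivatives: since $\nu$ is counting measure, $\nu_m(\{j\}) = \nu(\Psi^{-m}(\{j\})) = |\Psi^{-m}(\{j\})|$ for each $j \in \mathbb{N}$, whence $f_{\Psi^m}(j) = |\Psi^{-m}(\{j\})|$ and $E_m = \{j \in \mathbb{N} : f_{\Psi^m}(j)=0\} = \mathbb{N}\setminus\Psi^m(\mathbb{N})$. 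Equivalently, the support of $\nu_m$ is exactly the range $\Psi^m(\mathbb{N})$, with the convention $\Psi^0 = \mathrm{id}$, so that $\Psi^0(\mathbb{N}) = \mathbb{N}$ and $\nu_0 = \nu$.

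The bridging observation is that two measures absolutely continuous with respect to the counting measure are equivalent precisely when their supports coincide. Hence $\nu_m$ and $\nu_{m+1}$ are equivalent if and only if $\Psi^m(\mathbb{N}) = \Psi^{m+1}(\mathbb{N})$. I would pair this with the elementary monotonicity $\Psi^{m+1}(\mathbb{N}) = \Psi^m(\Psi(\mathbb{N})) \subseteq \Psi^m(\mathbb{N})$: the ranges form a decreasing chain, so the failure of equivalence at a given level is the same as a strict drop $\Psi^{m+1}(\mathbb{N}) \subsetneq \Psi^m(\mathbb{N})$, i.e. the existence of a point lying in $\Psi^m(\mathbb{N})$ but not in $\Psi^{m+1}(\mathbb{N})$.

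With this dictionary both implications reduce to the restated infinite-ascent criterion, namely that $\mathcal{A}(C_\Psi)=\infty$ exactly when $\nu_m$ and $\nu_{m+1}$ are never equivalent. For the forward direction, assuming $\mathcal{A}(C_\Psi)=\infty$ gives inequivalence of $\nu_{m-1}$ and $\nu_m$ for every $m\geq 1$, hence $\Psi^{m-1}(\mathbb{N})\setminus\Psi^m(\mathbb{N})\neq\emptyset$, and I select $n_m$ from this difference to obtain $n_m\in\Psi^{m-1}(\mathbb{N})$ and $n_m\notin\Psi^m(\mathbb{N})$. Conversely, the existence of such a sequence $\langle n_m\rangle$ forces $\Psi^{m-1}(\mathbb{N})\neq\Psi^m(\mathbb{N})$, hence $\nu_{m-1}\not\sim\nu_m$, for every $m\geq 1$, and the criterion returns $\mathcal{A}(C_\Psi)=\infty$.

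The only step demanding genuine care, and the one I would flag as the main obstacle, is the distinctness of the $n_m$ required by the statement; this is not guaranteed by an arbitrary choice of one element per difference set. I would recover it from monotonicity of the ranges, which makes the gap sets $G_m := \Psi^{m-1}(\mathbb{N})\setminus\Psi^m(\mathbb{N})$ pairwise disjoint: if $m<m'$, any element of $G_{m'}$ lies in $\Psi^{m'-1}(\mathbb{N})\subseteq\Psi^m(\mathbb{N})$, while every element of $G_m$ is by definition excluded from $\Psi^m(\mathbb{N})$, so $G_m\cap G_{m'}=\emptyset$. Therefore any selection $n_m\in G_m$ consists of pairwise distinct natural numbers, completing both directions.
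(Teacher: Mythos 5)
Your proof is correct, but it follows a genuinely different route from the paper's. The paper dismisses sufficiency as ``a direct conclusion'' (implicitly: $\chi_{\{n_m\}}$ lies in $\mathcal{N}(C_\Psi^m)\setminus\mathcal{N}(C_\Psi^{m-1})$ because $\Psi^{-m}(\{n_m\})=\emptyset$ while $\Psi^{-(m-1)}(\{n_m\})\neq\emptyset$), and obtains necessity by invoking Theorem 3.9, extracting the $n_m$ from the sets $A_m\subseteq\Psi^{m-1}(\mathbb{N})$ that cannot be written as images $\Psi^m(S)$ of sets of positive measure. You instead specialise the measure-equivalence criterion (Lemma 3.2 together with Theorem 3.5) to the counting measure: the computation $f_{\Psi^m}(j)=|\Psi^{-m}(\{j\})|$, hence $E_m=\mathbb{N}\setminus\Psi^m(\mathbb{N})$, converts equivalence of $\nu_m$ and $\nu_{m+1}$ into equality of the decreasing ranges $\Psi^m(\mathbb{N})$, and both implications then fall out of a single dictionary. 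Your route is more self-contained, and it supplies the one detail that the paper's proof asserts without justification, namely that the $n_m$ can be chosen \emph{distinct}: your observation that the gap sets $G_m=\Psi^{m-1}(\mathbb{N})\setminus\Psi^m(\mathbb{N})$ are pairwise disjoint (for $m<m'$ every element of $G_{m'}$ lies in $\Psi^{m'-1}(\mathbb{N})\subseteq\Psi^m(\mathbb{N})$, which $G_m$ avoids by definition) makes any selection automatically injective, and it would equally repair the corresponding step in the paper's own argument.
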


\begin{proof} The sufficient part is a direct conclusion. For the necessary part, using Theorem 3.9, we  get a sequence of non empty measurable subsets ${A_m}$ of $\mathbb{N}$ satisfying $A_m \subseteq \Psi^{m-1}(\mathbb{N})$ and $A_m \nsubseteq \Psi^m(\mathbb{N}) \subseteq \Psi^{m-1}(\mathbb{N})$. Hence we can take a sequence of distinct natural number $n_m$ satisfying $n_m \in \Psi^{m-1} (\mathbb{N})$ and $n_m \notin \Psi^m (\mathbb{N})$.\end{proof}

\begin{center}
	\section{Descent of composition operators}
\end{center}
\begin{definition} A measure space $(E, \mathcal{E},\nu)$ is said to be \textit{separable} if for every distinct pair of points $x$ and $y$ in $E$, we can find disjoint positive measurable sets $E_1$ and $E_2$ such that $x \in E_1$ and $y \in E_2$.\end{definition}

%Next, we discuss the results on descent of the composition operator $C_\Psi\in \mathcal{B}(L(p, q))$ on $L(p, q)$, $1<p\leq \infty$, $1\leq q \leq \infty$ . For a $\sigma$-finite measure space $(X, \mathcal{A}, \nu)$, Theorem 3.3 of [1] states that a composition operator $C_T$ on $L(p, q)$ is surjective if and only if $h_T$, the Radon Nikodym derivative of $\nu o T^{-1}$ with respect to $\nu$, is bounded away from zero on its support and $T^{-1}(\mathcal{A}) = \mathcal{A}$. This fact suggests the following.\\

\begin{theorem}
%	If $T$ inducing 
The composition operator $C_\Psi$ on the Orlicz-Lorentz space $L_{(\phi,w)}$ has $\mathcal{D}(C_\Psi)=0$ if $C_\Psi$ is bounded away from zero on its support and $\Psi^{-1}(\mathcal{E}) = \mathcal{E}$.\end{theorem}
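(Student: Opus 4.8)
The plan is to read $\mathcal{D}(C_\Psi)=0$ as the statement that $R(C_\Psi)=R(C_\Psi^0)=L_{(\phi,w)}$, i.e.\ that $C_\Psi$ is surjective, and then to construct, for an arbitrary $h\in L_{(\phi,w)}$, a genuine preimage $f\in L_{(\phi,w)}$ with $f\circ\Psi=h$. The factorization comes from the hypothesis $\Psi^{-1}(\mathcal{E})=\mathcal{E}$: every $\mathcal{E}$-measurable $h$ is then $\Psi^{-1}(\mathcal{E})$-measurable, and by the Doob--Dynkin factorization lemma for scalar-valued functions there is an $\mathcal{E}$-measurable $f$ with $h=f\circ\Psi$ $\nu$-a.e. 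Since the push-forward $\nu_1=\nu\Psi^{-1}$ has density $f_\Psi$ and hence is carried by the support $S=\{f_\Psi>0\}$, I may redefine $f$ to vanish on $\{f_\Psi=0\}$ without disturbing $f\circ\Psi$; this normalization is precisely what lets the ``bounded away from zero on its support'' hypothesis (read as $f_\Psi\geq\epsilon$ on $S$, with $\epsilon\le 1$ without loss of generality) take effect.

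The second step is a distribution-function estimate that upgrades this measurable preimage to an element of the space. For $s>0$,
\[
\nu_h(s)=\nu\{|f\circ\Psi|>s\}=\nu\bigl(\Psi^{-1}\{|f|>s\}\bigr)=\int_{\{|f|>s\}}f_\Psi\,d\nu .
\]
Because $\{|f|>s\}\subseteq S$ and $f_\Psi\geq\epsilon$ there, this gives $\nu_h(s)\geq\epsilon\,\nu_f(s)$, hence $\nu_f(s)\le\epsilon^{-1}\nu_h(s)$ for every $s>0$. Passing to non-increasing rearrangements via $f^*(t)=\inf\{s:\nu_f(s)\le t\}$, the inclusion $\{s:\nu_h(s)\le\epsilon t\}\subseteq\{s:\nu_f(s)\le t\}$ yields the pointwise bound $f^*(t)\le h^*(\epsilon t)$.

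Finally I would feed this into the modular. With the substitution $u=\epsilon t$ and the monotonicity of the weight ($w(u/\epsilon)\le w(u)$, since $\epsilon\le1$),
\[
\int_0^\infty \phi\bigl(\alpha f^*(t)\bigr)w(t)\,dt\le\int_0^\infty\phi\bigl(\alpha h^*(\epsilon t)\bigr)w(t)\,dt=\frac1\epsilon\int_0^\infty\phi\bigl(\alpha h^*(u)\bigr)w(u/\epsilon)\,du\le\frac1\epsilon\int_0^\infty\phi\bigl(\alpha h^*(u)\bigr)w(u)\,du .
\]
As $h\in L_{(\phi,w)}$, the right-hand side is finite for some $\alpha>0$, so the left-hand side is finite for that same $\alpha$ and $f\in L_{(\phi,w)}$. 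Thus $C_\Psi f=h$, so $C_\Psi$ is onto and $\mathcal{D}(C_\Psi)=0$.

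I expect the main obstacle to be the factorization step: turning the set-algebra identity $\Psi^{-1}(\mathcal{E})=\mathcal{E}$ into a pointwise representation $h=f\circ\Psi$ with $f$ a single $\mathcal{E}$-measurable function defined (up to $\nu_1$-null sets) on all of $E$, rather than merely on the range of $\Psi$, together with the legitimacy of normalizing $f$ to vanish off $S$. Once that representation is secured, the rearrangement inequality $f^*(t)\le h^*(\epsilon t)$ and the modular estimate above are routine, relying only on the monotonicity of $w$ and the harmless reduction to $\epsilon\le 1$.
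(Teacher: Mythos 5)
The paper states this theorem without any proof, so there is nothing to compare your argument against; on its own merits, your proof is correct and complete. Reading $\mathcal{D}(C_\Psi)=0$ as surjectivity of $C_\Psi$ is the right interpretation, the Doob--Dynkin factorization legitimately converts $\Psi^{-1}(\mathcal{E})=\mathcal{E}$ into $h=f\circ\Psi$ with $f$ an $\mathcal{E}$-measurable function (the target $\mathbb{C}$ is standard Borel, so no regularity of $E$ is needed), and the normalization of $f$ off $S=\{f_\Psi>0\}$ is harmless because $\nu(\Psi^{-1}(E\setminus S))=\int_{E\setminus S}f_\Psi\,d\nu=0$. The chain $\nu_h(s)=\int_{\{|f|>s\}}f_\Psi\,d\nu\geq\epsilon\,\nu_f(s)$, the resulting bound $f^*(t)\leq h^*(\epsilon t)$, and the change of variables with $w$ non-increasing are all sound, and together they show $f\in L_{(\phi,w)}$, hence $R(C_\Psi)=L_{(\phi,w)}$. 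Two small points worth making explicit if you write this up: you are interpreting the hypothesis ``$C_\Psi$ is bounded away from zero on its support'' as the paper's condition $f_\Psi\geq\epsilon$ a.e.\ on $\{f_\Psi\neq 0\}$ (the paper's definition is phrased for the transformation $\Psi$, with an unnamed $f$ that must be $f_\Psi$), and the paper's definition of descent formally requires $m>0$, so the case $\mathcal{D}(C_\Psi)=0$ has to be read in the standard way as $R(C_\Psi)=R(C_\Psi^{0})$, exactly as you do.
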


\begin{theorem} Suppose that $(E, \mathcal{E},\nu)$ is a separable $\sigma$-finite measure space. A necessary condition for the composition operator $C_\Psi$ on $L_{(\phi,w)}$  to have  $\mathcal{D}(C_\Psi) < \infty$ is that $\hat{\Psi_m}$ is injective for some non negative integer $m$, where $\hat{\Psi_m} = \Psi|_{\mathcal{R}(\Psi^m)}: \mathcal{R}(\Psi^m) \to \mathcal{R}(\Psi^m)$.\end{theorem}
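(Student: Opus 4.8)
The plan is to assume $\mathcal{D}(C_\Psi) = d < \infty$ and to show that the restriction $\hat{\Psi_d} = \Psi|_{\mathcal{R}(\Psi^d)}$ is injective, which then supplies the required index $m = d$. The guiding principle is the dictionary between operator ranges and pullback $\sigma$-algebras: a function $g$ lies in $\mathcal{R}(C_\Psi^k)$ precisely when $g = f \circ \Psi^k$, that is, when $g$ is (a.e.) measurable with respect to $\Psi^{-k}(\mathcal{E})$ and hence constant along the fibres of the point map $\Psi^k$. Since $\Psi^m(x)=\Psi^m(x')$ forces $\Psi^{m+1}(x)=\Psi^{m+1}(x')$, the fibres of $\Psi^{m+1}$ are coarser than those of $\Psi^m$; this is exactly why $\mathcal{R}(C_\Psi^{m+1}) \subseteq \mathcal{R}(C_\Psi^m)$. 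Equality at level $d$ should therefore mean that passing from $\Psi^d$ to $\Psi^{d+1}$ merges no fibres, and the point-level form of ``no merging'' is precisely that $\hat{\Psi_d}$ identifies no two distinct points of $\mathcal{R}(\Psi^d)$.

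To make this precise I would argue by contradiction. Supposing $\hat{\Psi_d}$ is not injective, I obtain distinct points $u,v \in \mathcal{R}(\Psi^d)$ with $\Psi(u)=\Psi(v)$. Here the separability hypothesis enters decisively: it produces disjoint positive-measure sets $E_1 \ni u$ and $E_2 \ni v$ that measurably separate the two points. I then form the candidate
$$ g \;=\; \chi_{E_1} \circ \Psi^d \;=\; \chi_{(\Psi^d)^{-1}(E_1)}, $$
which by construction belongs to $\mathcal{R}(C_\Psi^d)$. The objective is to show $g \notin \mathcal{R}(C_\Psi^{d+1})$, contradicting the stabilisation $\mathcal{R}(C_\Psi^{d+1}) = \mathcal{R}(C_\Psi^d)$.

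The decisive computation is that no function of the form $h \circ \Psi^{d+1}$ can equal $g$. Choosing $a,c \in E$ with $\Psi^d(a)=u$ and $\Psi^d(c)=v$ (possible because $u,v \in \mathcal{R}(\Psi^d)$), one has $g(a)=\chi_{E_1}(u)=1$ while $g(c)=\chi_{E_1}(v)=0$, since $E_1 \cap E_2 = \varnothing$. On the other hand, any $h \circ \Psi^{d+1}$ satisfies $h(\Psi^{d+1}(a)) = h(\Psi(u)) = h(\Psi(v)) = h(\Psi^{d+1}(c))$ and is thus forced to take equal values at $a$ and $c$. Consequently $g$ cannot be written as $h \circ \Psi^{d+1}$; the same mechanism shows that no set $\Psi^{-(d+1)}(B)$ coincides with $(\Psi^d)^{-1}(E_1)$, because $\Psi(u)=\Psi(v)$ places $u$ and $v$ on the same side of every $\Psi^{-1}(\mathcal{E})$-set whereas $E_1$ separates them.

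I expect the main obstacle to be upgrading this pointwise separation to a genuine statement in $L_{(\phi,w)}$, whose elements are equivalence classes modulo $\nu$-null sets. The points $a$ and $c$ witness the contradiction only pointwise, so the argument must guarantee that the disagreement between $g$ and every $\Psi^{-(d+1)}(\mathcal{E})$-measurable function occurs on a set of positive measure. This is exactly where separability must be combined with $\sigma$-finiteness: I would use them to select $E_1,E_2$ so that the pullbacks $(\Psi^d)^{-1}(E_1)$ and $(\Psi^d)^{-1}(E_2)$ carry positive (and finite) measure, and then invoke non-singularity to conclude that $\chi_{(\Psi^d)^{-1}(E_1)}$ is a nonzero element of $L_{(\phi,w)}$ not a.e. equal to any $\Psi^{-(d+1)}(\mathcal{E})$-measurable function. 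Once the separation holds at positive-measure scale, the contradiction with $\mathcal{R}(C_\Psi^{d+1}) = \mathcal{R}(C_\Psi^d)$ is immediate, and the injectivity of $\hat{\Psi_d}$ follows, giving the asserted necessary condition with $m=d$.
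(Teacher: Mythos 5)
Your argument is essentially the paper's own proof in contrapositive form: the paper likewise supposes $\hat{\Psi_m}$ fails to be injective, uses separability to choose disjoint positive-measure sets $E_1\ni\Psi^m(x_1)$ and $E_2\ni\Psi^m(x_2)$, and exhibits $(\chi_{E_1}-\chi_{E_2})\circ\Psi^m$ as an element of $\mathcal{R}(C_\Psi^m)\setminus\mathcal{R}(C_\Psi^{m+1})$ by evaluating at the points $x_1,x_2$, exactly as you do with $\chi_{E_1}\circ\Psi^d$ at $a,c$. The a.e.-versus-pointwise obstacle you flag at the end is genuine (and non-singularity alone will not give $\nu((\Psi^d)^{-1}(E_1))>0$, since non-singularity only controls preimages of null sets), but the paper's proof does not address it either --- its chain $1=f_1(x)=\cdots=-1$ is precisely the pointwise evaluation you perform --- so your proposal matches the published argument and is candid about the one step both leave open.
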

\begin{proof}
 Suppose that  $\mathcal{D}(C_\Psi)< \infty$ on $L_{(\phi,w)}$. Let if possible for each $m$, the mapping $\hat{\Psi^m}$ is not injective,  then there exist $x_1$, $x_2$, in $E$ such that
  $$\Psi^m (x_1) \neq \Psi^m (x_2)\  \text{and} $$
  $$\Psi^{m+1}(x_1) = \Psi^{m+1}(x_2).$$
   Also, since $(E,\mathcal{E},\nu)$ is separable and $\sigma$-finite, therefore there exists two disjoint measurable sets $E_1$ and $E_2$ with non-zero finite measures containing $x(= \Psi^m(x_1))$ and $y(=\Psi^m(x_2))$, respectively.
   
    Hence $\chi_{E_1},\chi_{E_2} \in L_{(\phi,\omega)}$. Now consider the element $f_1$ of $L_{(\phi,\omega)}$ given by $$f_1 = \mathcal{X}_{X_1}-\mathcal{X}_{X_2}.$$

     Then $f_2 = C_\Psi^m f_1 \in \mathcal{R}(C_\Psi^m)$. But $f_2 \notin \mathcal{R}(C_\Psi^{m+1})$ because if  $f_2 = C_\Psi^{m+1}f_3$ for some $f_3 \in L_{(\phi,\omega)}(E)$, 
     \begin{align*}
     & 1 = f_1(x)\\
       &= f_1(\Psi^mx_1)\\
        &= C_\Psi^{m+1}f_\Psi(x_1)\\
         &= C_\Psi^{m+1}f_\Psi(x_2)\\
          &= f_2(x_2)\\
           &= C_\Psi^mf_1(x_2)\\
           &=f_1(y)\\
            &= -1.
\end{align*}
This implies that $\mathcal{R} (C_\Psi^{m+1}) \subset \mathcal{R} (C_\Psi^m)$ for each non negative integer $m$. Hence  $\mathcal{D}(C_\Psi) = \infty$.\end{proof}

\begin{theorem} Suppose that $(E,\mathcal{E},\nu)$ is a separable $\sigma$-finite measure space. Then a necessary condition for the composition operator $C_\Psi$ on $L_{(\phi,w)}$, to have the descent at most $m$ is that  $\Psi_m$ is injective.\end{theorem}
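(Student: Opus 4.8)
The plan is to prove the contrapositive: if the restriction $\Psi_m = \Psi|_{\mathcal{R}(\Psi^m)}$ fails to be injective, then $\mathcal{D}(C_\Psi) > m$. Recall that the ranges $\mathcal{R}(C_\Psi) \subseteq \mathcal{R}(C_\Psi^2) \subseteq \cdots$ form an increasing chain which, once two consecutive members coincide, remains constant thereafter; consequently $\mathcal{D}(C_\Psi) \le m$ holds if and only if $\mathcal{R}(C_\Psi^{m+1}) = \mathcal{R}(C_\Psi^m)$. Thus it suffices to exhibit a single element of $\mathcal{R}(C_\Psi^m)$ that does not lie in $\mathcal{R}(C_\Psi^{m+1})$, for this forces the strict inclusion $\mathcal{R}(C_\Psi^{m+1}) \subsetneq \mathcal{R}(C_\Psi^m)$.

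First I would use the failure of injectivity of $\Psi_m$ to produce points $x_1, x_2 \in E$ with $\Psi^m(x_1) \neq \Psi^m(x_2)$ but $\Psi^{m+1}(x_1) = \Psi^{m+1}(x_2)$. Writing $x = \Psi^m(x_1)$ and $y = \Psi^m(x_2)$, I would invoke separability together with $\sigma$-finiteness of $(E, \mathcal{E}, \nu)$ to obtain disjoint measurable sets $E_1 \ni x$ and $E_2 \ni y$ with $0 < \nu(E_1), \nu(E_2) < \infty$, so that $\chi_{E_1}, \chi_{E_2} \in L_{(\phi,w)}$. Setting $f_1 = \chi_{E_1} - \chi_{E_2}$ and $f_2 = C_\Psi^m f_1 = \chi_{\Psi^{-m}(E_1)} - \chi_{\Psi^{-m}(E_2)}$, I immediately have $f_2 \in \mathcal{R}(C_\Psi^m)$, mirroring the separating construction used in the preceding theorem but now pinned to the specific index $m$.

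Next I would show $f_2 \notin \mathcal{R}(C_\Psi^{m+1})$. Supposing instead that $f_2 = C_\Psi^{m+1} f_3 = f_3 \circ \Psi^{m+1}$ for some $f_3 \in L_{(\phi,w)}$, the identity $f_1 \circ \Psi^m = f_3 \circ \Psi^{m+1}$ evaluated along $x_1$ and $x_2$ gives, using $\Psi^{m+1}(x_1) = \Psi^{m+1}(x_2)$,
\[
1 = f_1(x) = f_3(\Psi^{m+1}(x_1)) = f_3(\Psi^{m+1}(x_2)) = f_1(y) = -1,
\]
a contradiction. Hence $\mathcal{R}(C_\Psi^{m+1}) \subsetneq \mathcal{R}(C_\Psi^m)$, so $\mathcal{D}(C_\Psi) > m$, which completes the contrapositive.

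The hard part will be legitimising the pointwise evaluation in the displayed chain, since members of $L_{(\phi,w)}$ are equivalence classes modulo $\nu$-null sets and the equalities $f_1 \circ \Psi^m = f_2$ and $f_3 \circ \Psi^{m+1} = f_2$ hold only almost everywhere; a single point $x_1$ or $x_2$ need not carry positive measure. To make the contradiction rigorous I would upgrade the two points to positive-measure witnesses: using separability I would select disjoint positive-measure sets $S_1 \ni x_1$ and $S_2 \ni x_2$ on which $\Psi^m$ keeps its values inside $E_1$ and $E_2$ respectively, while $\Psi^{m+1}$ maps both into a common measurable set, so that $f_2 = 1$ on $S_1$ and $f_2 = -1$ on $S_2$ whereas $f_3 \circ \Psi^{m+1}$ is forced to agree on the common image. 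The bookkeeping needed to guarantee that the separating behaviour of $\Psi^m$ and the identifying behaviour of $\Psi^{m+1}$ both persist on sets of positive measure is the only real obstacle; once it is secured, the argument closes exactly as above.
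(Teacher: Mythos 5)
Your proposal follows essentially the same route as the paper: the paper leaves this theorem unproved, but it is the fixed-$m$ instance of the argument given for the preceding theorem, namely taking $x_1,x_2$ with $\Psi^m(x_1)\neq\Psi^m(x_2)$ and $\Psi^{m+1}(x_1)=\Psi^{m+1}(x_2)$, separating $\Psi^m(x_1)$ and $\Psi^m(x_2)$ by disjoint finite positive-measure sets, and showing $f_1\circ\Psi^m\in\mathcal{R}(C_\Psi^m)\setminus\mathcal{R}(C_\Psi^{m+1})$ via the $1=-1$ contradiction. Your closing remark about legitimising pointwise evaluation of $\nu$-equivalence classes is well taken --- the paper's own argument evaluates at single points and only addresses this in the corollary by assuming singletons have positive measure --- though note that the range chain $\mathcal{R}(C_\Psi^m)$ is decreasing, not increasing, in $m$ (a slip you share with the paper's introduction, and one that does not affect the argument).
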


\begin{corollary}Let  $(E,\mathcal{E}, \nu)$ be the measure space such that every singleton set has positive measure and $C_\Psi \in  \mathcal{B}(L_{(\phi,\omega)})$. Then $\mathcal{D}(C_\Psi) >m $ if the mapping $\Psi|_{\mathcal{R}(\Psi^m)}: \mathcal{R}(\Psi^m) \to  \mathcal{R}(\Psi^m)$ is not injective.\end{corollary}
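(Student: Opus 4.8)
The plan is to argue directly, in the contrapositive spirit of the preceding necessary-condition theorem: assuming that $\Psi|_{\mathcal{R}(\Psi^m)}$ fails to be injective, I will exhibit a single function lying in $\mathcal{R}(C_\Psi^m)$ but not in $\mathcal{R}(C_\Psi^{m+1})$. Since $C_\Psi^{m+1} = C_\Psi^m C_\Psi$ gives the unconditional inclusion $\mathcal{R}(C_\Psi^{m+1}) \subseteq \mathcal{R}(C_\Psi^m)$, producing one such witness forces this inclusion to be strict, i.e. $\mathcal{R}(C_\Psi^{m+1}) \subsetneq \mathcal{R}(C_\Psi^m)$, which is precisely the assertion $\mathcal{D}(C_\Psi) > m$. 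In effect the corollary is the contrapositive of the theorem that finite descent at most $m$ requires $\Psi|_{\mathcal{R}(\Psi^m)}$ to be injective; the hypothesis that every singleton has positive measure enters in two distinct places, which I single out below.

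First I would translate the non-injectivity of $\hat{\Psi}_m = \Psi|_{\mathcal{R}(\Psi^m)}$ into the existence of two distinct points $a, b \in \mathcal{R}(\Psi^m)$ with $\Psi(a) = \Psi(b)$. Writing $a = \Psi^m(x_1)$ and $b = \Psi^m(x_2)$, this reads $\Psi^m(x_1) \neq \Psi^m(x_2)$ but $\Psi^{m+1}(x_1) = \Psi^{m+1}(x_2)$. Since $\{a\}$ and $\{b\}$ are disjoint sets of positive finite measure, the norm identity $\|\chi_S\| = 1/\phi^{-1}(1/\nu(S))$ shows $\chi_{\{a\}}, \chi_{\{b\}} \in L_{(\phi,w)}$, so I may set $f_1 = \chi_{\{a\}} - \chi_{\{b\}}$ and $f_2 = C_\Psi^m f_1 \in \mathcal{R}(C_\Psi^m)$. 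This is the first use of the hypothesis: the disjoint positive-measure sets separating $a$ and $b$ can be taken to be $\{a\}$ and $\{b\}$ themselves, so neither separability nor $\sigma$-finiteness is needed to manufacture them, in contrast to the proofs of the preceding theorems.

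The crux is to show $f_2 \notin \mathcal{R}(C_\Psi^{m+1})$. Suppose, for contradiction, that $f_2 = C_\Psi^{m+1} f_3 = f_3 \circ \Psi^{m+1}$ for some $f_3 \in L_{(\phi,w)}$; then $f_1 \circ \Psi^m = f_3 \circ \Psi^{m+1}$ almost everywhere. Evaluating would give $f_1(a) = f_1(\Psi^m x_1) = f_3(\Psi^{m+1} x_1)$ and $f_1(b) = f_1(\Psi^m x_2) = f_3(\Psi^{m+1} x_2)$, and since $\Psi^{m+1}(x_1) = \Psi^{m+1}(x_2)$ the two right-hand sides coincide, forcing $1 = f_1(a) = f_1(b) = -1$, a contradiction that yields the desired strict inclusion. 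The step I expect to be the real obstacle — and the second, essential use of the hypothesis — is justifying these pointwise evaluations: members of $L_{(\phi,w)}$ are only defined up to a.e. equality, so a priori evaluating at the individual points $x_1, x_2$ is meaningless. This is exactly what the positivity of singleton measures repairs: because $\{x_1\}$ and $\{x_2\}$ have positive measure, the almost-everywhere identity $f_1 \circ \Psi^m = f_3 \circ \Psi^{m+1}$ must in fact hold at $x_1$ and $x_2$, so the evaluations above are legitimate and the contradiction stands. Hence $\mathcal{R}(C_\Psi^{m+1}) \subsetneq \mathcal{R}(C_\Psi^m)$ and $\mathcal{D}(C_\Psi) > m$.
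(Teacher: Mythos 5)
Your argument is essentially the paper's own proof: the paper simply says to rerun the proof of the preceding necessary-condition theorem with the singletons $\{\Psi^m(x_1)\}$ and $\{\Psi^m(x_2)\}$ playing the role of the separating sets $E_1$, $E_2$, which is exactly what you do with $f_1=\chi_{\{a\}}-\chi_{\{b\}}$. Your additional observation that positivity of singleton measures is also what legitimizes the pointwise evaluations in the a.e.\ identity $f_1\circ\Psi^m=f_3\circ\Psi^{m+1}$ is a genuine (and welcome) tightening of a step the paper leaves implicit, but it does not change the route.
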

\begin{proof}  Let the measure space   $(E,\mathcal{E}, \nu)$ be $\sigma$-finite and separable and  $\Psi|_{\mathcal{R}(\Psi^m)}$ is not injective, then (as in Theorem 4.2), there exists measurable sets $E_1$ and $E_2$  containing $x(=\Psi^m(x_1))$ and $y(=\Psi^m(x_2))$ as the singleton sets ${x_1}$ and ${x_2}$ respectively, we obtain that $\mathcal{D}(C_\Psi) >m $.\end{proof}

\end{document}